\newcommand{\ts}[1]		{{\textstyle{#1}}}
\newcommand{\demi}		{{\ts{\frac{1}{2}}}}
\newcommand{\op}[1]		{{\mathcal{{#1}}}}
\newcommand{\done}{\ding{182}}
\newcommand{\dtwo}{\ding{183}}
\newcommand{\dthree}{\ding{184}}
\newcommand{\opBmp}[1]	{{\op{B}_{k}}}
\newcommand{\N}{\mathbb{N}}
\newcommand{\M}{\mathbb{M}}
\newcommand{\Z}{\mathbb{Z}}
\newcommand{\R}{\mathbb{R}}
\newcommand{\ds}{\displaystyle}
\newcommand{\be}{\begin{equation}}
\newcommand{\ee}{\end{equation}}
\newcommand{\bea}{\begin{eqnarray}}
\newcommand{\eea}{\end{eqnarray}}
\newcommand{\ba}{\begin{array}}
\newcommand{\ea}{\end{array}}
\newcommand{\er}[1]{\hbox{(\ref{#1})}}
\newcommand{\nn}			{{\nonumber}}
\newtheorem{theorem}            {Theorem}[section]
\newtheorem{corollary}          [theorem]{Corollary}
\newtheorem{definition}         [theorem]{Definition}
\newtheorem{lemma}              [theorem]{Lemma}
\newtheorem{sideremark}         [theorem]{Remark}
\newtheorem{sideeg}           [theorem]{Example}
\newtheorem{sideconj}           [theorem]{Conjecture}
\newtheorem{sideassumption}   [theorem]{Assumption}
\newenvironment{remark}         {\begin{sideremark}\rm}{\end{sideremark}}
\newenvironment{assumption} {\begin{sideassumption}\it}{\end{sideassumption}}
\newenvironment{proof}		{{\it Proof:}}{\hfill{$\blacksquare$}}
\title{Max-plus fundamental solution semigroups for a class of difference Riccati equations}
\author{Huan Zhang \qquad\qquad Peter M. Dower%
\thanks{This paper was not presented at any IFAC 
meeting. This research is supported by grants FA2386-12-1-4084 and DP120101549 from AFOSR and the Australian Research Council. 
Zhang and Dower are with the Department of Electrical and Electronic Engineering, University of Melbourne, Melbourne, Victoria 3010, Australia.
Email: \{hzhang5,pdower\}@unimelb.edu.au}
}
\begin{document}           


\maketitle

\begin{abstract}

Recently, a max-plus dual space fundamental solution semigroup for a class of difference Riccati equation (DRE) has been developed. This fundamental solution semigroup is represented in terms of the kernel of a specific max-plus linear operator that plays the role of the dynamic programming evolution operator in a max-plus dual space. In order to fully understand connections between this dual space fundamental solution semigroup and evolution of the value function of the underlying optimal control problem, a new max-plus primal space fundamental solution semigroup for the same class of difference Riccati equations is presented. Connections and commutation results between this new primal space fundamental solution semigroup and the recently developed dual space fundamental solution semigroup are established. 

{\bf Keywords.} Difference Riccati equations, Max-plus algebra, Fundamental solution semigroup.    

\end{abstract}

\section{Introduction}

The difference Riccati equation (DRE) is of fundamental importance in the study and solution of optimal control and filtering problems formulated in discrete time \cite{AM:89}, \cite{B:05}. In the control context, a solution of the DRE (or of the corresponding differential Riccati equation in continuous time) characterises the controller that solves the associated optimal control problem, and the optimal cost associated with that solution. One of the important topics in the investigation of both difference and differential Riccati equations is the characterisation and representation of all solutions via some form of {\em fundamental solution} \cite{DM:73}, \cite{KTKR:06}. For example, in the (continuous time) differential Riccati equation case, the well-known Davison-Maki fundamental solution \cite{DM:73} exploits the solution of the corresponding Hamiltonian differential equation via a Bernoulli substitution technique.  Alternatively, a max-plus fundamental solution developed  in \cite{M:08} exploits the linearity of the dynamic programming evolution operator associated with the attendant optimal control problem, with respect to the max-plus algebra. It has been demonstrated that continuous and discrete time formulations of this max-plus fundamental solution facilitate efficient solution of the differential and difference Riccati equations respectively \cite{DM1:11},  \cite{DM2:12}, \cite{DM1:12}, \cite{M:08}, \cite{ZD3:13}. In the latter case, this max-plus fundamental solution has also been recently applied in investigating existence of solutions and finite escape properties of DRE solutions \cite{ZD4:13}. 

In the development of this max-plus fundamental solution for either continuous \cite{M:08} or discrete time \cite{ZD3:13}, a specific duality pairing is employed that uniquely identifies the value function on a given horizon, which resides in a {\em primal space}, with a corresponding element of a {\em max-plus dual space}, via the Legendre-Fenchel transform. This dual space element is used to define the kernel of a max-plus linear max-plus integral operator indexed with the same time horizon. By virtue of the aforementioned duality pairing, max-plus linearity of the dynamic programming evolution operator, and the semigroup property enjoyed by this dynamic programming evolution operator, it is shown that the set of all such time horizon indexed max-plus linear max-plus integral operators defines a semigroup in the dual space. In particular, the value function corresponding to any terminal payoff can be evolved to longer time horizons in the dual space by application of elements of this semigroup of max-plus linear max-plus integral operators. As evolution of the value function is equivalent to evolution of the difference or differential Riccati equation solution from an initial condition specified by the Hessian of the terminal payoff, and this terminal payoff may be selected arbitrarily from a large set of semiconvex functions, the aforementioned semigroup may be regarded as a {\em max-plus dual space fundamental solution semigroup} for the corresponding difference or differential Riccati equation.

In view of the existing max-plus dual space fundamental solution developed for difference Riccati equations \cite{ZD3:13}, the aim of this paper is to explore the existence, and subsequent properties, of a max-plus primal space fundamental solution semigroup for the same DRE. In principle, the construction of such a primal space fundamental solution semigroup involves the representation of the dynamic programming evolution operator for each time horizon in terms of a specific max-plus linear max-plus integral operator indexed by the same time horizon. The resulting class of time horizon indexed operators takes the same form as in the dual space case, but with each element  defined entirely on the primal space. It is shown that the primal and dual space fundamental solution semigroups are in fact isomorphic.

In terms of organisation, Section \ref{sec:LQR-DRE} introduces the class of DREs and associated discrete-time linear quadratic optimal control problem of interest. Section \ref{sec:space} defines the max-plus primal and dual spaces. Section \ref{sec:mp-dual} summarises the existing max-plus dual space fundamental solution semigroup \cite{ZD3:13}, followed by an analogous development of the new max-plus primal space fundamental solution semigroup in Section \ref{sec:mp-primal}. Section \ref{sec:connection} includes a detailed analysis of the connection between the max-plus primal and dual space fundamental solution semigroups.

In terms of notation, $\R, \N,\Z_{\ge0}$ denote the sets of reals, natural numbers, and non-negative integers respectively. Two sets of extended reals are denoted by $\R^{-}\doteq\R\cup\{-\infty\}$ and $\R^+\doteq \R\cup\{\infty\}$. The set of $n\times n$ real, symmetric matrices is denoted by $\M^{n\times n}\doteq\{P\in\R^{n\times n}| P=P^T\}$. Given $P\in\mathbb{M}^{n\times n}$, $P>0$ (respectively $P\ge 0$) denotes positive (nonnegative)
definiteness of $P$. The triple $(\R^{-},\oplus, \otimes)$ denotes a semiring, representing the max-plus algebra, with addition and multiplication operations defined respectively by $a\oplus b\doteq\max\{a,b\}$ and $a\otimes b\doteq a+b$. The max-plus integral of a function $f:\R^n\rightarrow\R^{-}$ is defined as $\int_{\R^n}^\oplus f(x)\,dx\doteq \sup_{x\in\R^n}f(x)$.

\section{The difference Riccati equation and optimal control}
\label{sec:LQR-DRE}

Attention is restricted to DREs of the form
\begin{align}
\label{eq:DRE}
P_{k+1}=\op{R}(P_k),\quad P_0\in\M^{n\times n},
\end{align}  
with the Riccati operator $\op{R}:\M^{n\times n}\rightarrow\M^{n\times n}$ defined by
\begin{align}
\label{eq:Riccati-op}
\op{R}(P)\doteq \Phi+A^TPA+A^TPB(\gamma^2\,I-B^TPB)^{-1}B^TPA.
\end{align}
Here, $A\in\R^{n\times n}$, $B\in\R^{n\times m}, n\ge m$, $\Phi\in\M^{n\times n}, \Phi>0$ are real matrices, and $\gamma\in\R$, $\gamma>0$, is fixed. DRE \er{eq:DRE} is an example of the indefinite difference Riccati equation \cite{SS:98}, \cite{ZDG:96}  as the so-called Popov matrix 
$
\Pi=\left[\ba{cc} \Phi  &0\\0&-\gamma^2\,I\ea\right]
$
is indefinite. For $k\in\N$, define $\op{R}_k$ iteratively by 
\begin{align}
\label{eq:op-Rk}
\op{R}_{k+1}=\op{R}\circ\op{R}_k,
\end{align}
so that  $
\op{R}_k=\underbrace{\op{R}\circ \op{R}\circ \cdots \circ \op{R}}_{k\text{ times}}. 
$
A matrix $P_k\in\M^{n\times n}$ is a solution of DRE \er{eq:DRE} at step $k\in\N$ corresponding to initial condition $P_0$ if $\gamma^2\,I-B^T\op{R}_i(P_0)B>0$ for $i=0,1,\cdots,k-1$. This solution can be expressed as $P_k=\op{R}_k(P_0)$. DRE \er{eq:DRE} arises in the study of linear quadratic regulator (LQR) \cite{AM:89}, where the underlying linear system dynamics are given by
\begin{align}
	x_{k+1} & = A\, x_k+B\, w_k\,, \quad x_0=x.
	\label{eq:system}
\end{align} 
The value function $W_k:\R^n\rightarrow\R$, $k\in\Z_{\ge0}$ of interest is defined by
\begin{align}
	W_k(x)
	& \doteq
	\sup_{w_{0,k-1}\in (\R^{m})^k}  J_k(x;\, w_{0,k-1})\,
	\label{eq:value}
\end{align}
via the total payoff
\begin{align}
\nn
	&J_k(x;w_{0,k-1})\doteq\sum_{i=0}^{k-1}\left({\frac{1}{2}}x_i^T\, \Phi \, x_i - {\frac{\gamma^2}{2}}\, |w_i|^2 \right) + {\frac{1}{2}}x_k^TP_0 x_k\,.
\end{align}
Here, $w_{0,k-1}\in(\R^{m})^k$ denotes an input sequence for system \er{eq:system} on interval $[0,k-1]$. It is well known \cite{AM:89} that $W_k$ is a quadratic function of form $W_k(x)=\frac{1}{2}x^T\,P_k\,x, x\in\R^n$, in which $P_k\in\M^{n\times n}$ is the solution at time $k$ of the DRE \er{eq:DRE} with initial condition $P_0$.

Define the (one-step) dynamic programming evolution operator $\op{S}$ by
\begin{align}
	& \hspace{-2mm}
	(\op{S} \, \phi)(x)
	\!\doteq\!\!
	\sup_{w\in\R^m} \{ \ts{\frac{1}{2} x^T \Phi\, x-{\frac{\gamma^2}{2}} \, |w|^2+ \phi(A x + B w)} \},
	\label{eq:op-DPP-1}
\end{align}
and $k$-step  dynamic programming evolution operator $\op{S}_k, k\in\N$, iteratively by
\begin{align}
	\op{S}_{k+1} \, \phi
	& \doteq\op{S} \left( \op{S}_k\, \phi \right)\,.
	\label{eq:op-DPP-k}
\end{align}
As a matter of convention, define $\op{S}_0\doteq\mathcal{I}$ to be the identity operator. Dynamic programming implies that the set $\{ \mathcal{S}_k, k\in\Z_{\ge 0}\}$ defines a semigroup of operators, see \cite{ZD3:13}, and an element $\op{S}_k$ propagates the terminal payoff $W_0:\R^n\rightarrow \R^-$ defined by
\begin{align}
\label{eq:W0}
W_0(x)=\frac{1}{2}x^T P_0 x, \quad x\in\R^n,
\end{align} 
to the value function via $W_k=\op{S}_kW_0$.  A fundamental property of the operator $\op{S}_k$ is that it is linear over the max-plus algebra, that is, $\op{S}_k(a\otimes \phi_1\oplus\phi_2)=a\otimes \op{S}_k\phi_1\oplus\op{S}_k\phi_2$, see \cite{M:06}, \cite{ZD3:13}.

\section{Max-plus primal and dual space}
\label{sec:space}
In order to introduce the existing max-plus dual space fundamental solution semigroup \cite{ZD3:13}, and subsequently develop a new max-plus primal space fundamental solution semigroup, it is necessary to first introduce the max-plus primal and dual spaces, and the associated duality pairing.
\begin{definition}
\label{def:semi-conv} (\cite{M:06}, \cite{M:08}) Given ${K}\in\M^{n\times n}$, a function $\phi:\R^n\rightarrow\R^-$ is uniformly semiconvex with respect to ${K}$ if the function $x\mapsto\phi(x)+\demi x^T {K} x:\R^n\rightarrow\R^-$ is convex on $\R^n$. Analogously, a function $\phi:\R^n\rightarrow\R^-$ is uniformly semiconcave with respect to ${K}$ if the function $x\mapsto\phi(x)-\demi x^T {K} x:\R^n\rightarrow\R^-$ is concave  on $\R^n$.
\end{definition}
The spaces of uniformly semiconvex and semiconcave functions are defined with respect to ${K}\in\M^{n\times n}$ by
\begin{align}
\label{eq:space-semiconvex}
& \hspace{-3mm}
\op{S}^{K}_+(\R^n)\doteq\left\{\!\phi:\R^n\rightarrow\R^-\biggl | \ba{c}\phi\text{ is uniformly}\\\text{semiconvex w.r.t. } {K} \ea\! \right\}\!,
\\
& \hspace{-3mm}
\op{S}^{K}_-(\R^n)\doteq\left\{\!\phi:\R^n\rightarrow\R^-\biggl |\ba{c}\phi\text{ is uniformly}\\\text{semiconcave w.r.t. } {K} \ea\!\right\}\!.
\label{eq:space-semiconcave}
\end{align}

As per \cite{M:06}, \cite{ZD3:13}, define a pair of operators $\op{D}_\psi$ and $\op{D}_\psi^{-1}$ for a given $M\in\M^{n\times n}$ by \begin{align}
	\op{D}_\psi \, \phi
	& = \left( \op{D}_\psi \, \phi \right)(\cdot)
	\doteq -\int_{\R^n}^\oplus \psi(x,\cdot) \otimes \left( - \phi(x) \right)\, dx\,,
	\label{eq:op-dual}
	\\
	\op{D}_\psi^{-1} \, \hat\phi
	& =  \left( \op{D}_\psi^{-1} \, \hat\phi \right)(\cdot)
	\doteq \int_{\R^n}^\oplus \psi(\cdot,z) \otimes \hat\phi(z) \, dz\,,
	\label{eq:op-inv-dual}
\end{align}
where the function $\psi:\R^n\times\R^n\rightarrow\R$ is a quadratic function defined for all $x,z\in\R^n$ by
\begin{align}
\label{eq:mp-basis-quad}
\psi(x,z)\doteq\frac{1}{2} (x-z)^T \, M\, (x-z).
\end{align}
Assume the following restrictions on $M\in\M^{n\times n}$ throughout.
\begin{assumption}
\label{ass:M}
Given $\gamma\in\R_{>0}$ and $B\in\R^{n\times m}$ as per \er{eq:Riccati-op} and \er{eq:system}, the matrix $M\in\M^{n\times n}$ in \er{eq:mp-basis-quad} satisfies the inequalities
\begin{align}
\label{ineq:M2}
\gamma^2\,I-B^T\op{R}_k(M)B&>0, \quad \forall~k\in\Z_{\ge0},
\\
\label{ineq:M}
\op{R}(M)-M&>0,
\\
\label{ineq:M3}
MB(\gamma^2\,I-B^TMB)^{-1}B^TM&>0.
\end{align}
\end{assumption}
Inequality \er{ineq:M2} requires that a particular solution $P_k=\op{R}_k(M)$ of DRE \er{eq:DRE} with initial condition $P_0=M$ exists for all $k\in\N$.  Inequality \er{ineq:M} implies (see Theorem \ref{thm:value-space}) that this particular solution $\op{R}_k(M)$ is nondecreasing in $k\in\Z_{\ge0}$ and satisfies $\op{R}_k(M)>M$ for all $k\in\N$. Inequality \er{ineq:M3} is useful in deriving the max-plus primal space fundamental solution semigroup in Section \ref{sec:mp-primal}.

The following result shows that the operators $\op{D}_\psi$ of \er{eq:op-dual} and $\op{D}_\psi^{-1}$ of \er{eq:op-inv-dual}  can be used to define a duality between the spaces $\op{S}_+^{-M}(\R^n)$ and $\op{S}_-^{-M}(\R^n)$ of \er{eq:space-semiconvex} and \er{eq:space-semiconcave}, where $M\in\M^{n\times n}$ is as per \er{eq:mp-basis-quad}.
\begin{theorem}
\label{thm:dual}
The operator $\op{D}_\psi$ of \er{eq:op-dual} is a bijection from $\op{S}_+^{-M}(\R^n)$ to $\op{S}_-^{-M}(\R^n)$ with inverse operator $\op{D}_\psi^{-1}$ given by \er{eq:op-inv-dual}.
\end{theorem}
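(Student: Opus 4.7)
The strategy is to reduce the max-plus integral operators $\op{D}_\psi$ and $\op{D}_\psi^{-1}$ to classical Legendre--Fenchel transforms by extracting the pure quadratic-in-$z$ (respectively $x$) terms from the kernel $\psi$, and then to invoke the Fenchel--Moreau biconjugate theorem. Concretely, for $\phi\in\op{S}_+^{-M}(\R^n)$ the function $g(x)\doteq \phi(x)-\demi x^T M x$ is convex by Definition~\ref{def:semi-conv}, and expanding $\psi(x,z)=\demi(x-z)^TM(x-z)$ inside the $\sup$/$\inf$ gives the clean identity
\begin{align*}
(\op{D}_\psi\,\phi)(z)
\;=\;-\tfrac{1}{2}z^TMz\,-\,g^*(-Mz),
\end{align*}
where $g^*$ is the convex conjugate of $g$. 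Symmetrically, for $\hat\phi\in\op{S}_-^{-M}(\R^n)$ the function $h(z)\doteq -\hat\phi(z)-\demi z^TMz$ is convex, and an analogous expansion yields
\begin{align*}
(\op{D}_\psi^{-1}\,\hat\phi)(x)
\;=\;\tfrac{1}{2}x^TMx\,+\,h^*(-Mx).
\end{align*}

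The range statements then fall out immediately. From the first identity, $(\op{D}_\psi\phi)(z)+\demi z^TMz=-g^*(-Mz)$ is concave (composition of the convex $g^*$ with the affine map $z\mapsto -Mz$, negated), so $\op{D}_\psi\phi\in\op{S}_-^{-M}(\R^n)$. From the second, $(\op{D}_\psi^{-1}\hat\phi)(x)-\demi x^TMx=h^*(-Mx)$ is convex, so $\op{D}_\psi^{-1}\hat\phi\in\op{S}_+^{-M}(\R^n)$. Thus both operators respect the required spaces.

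For the inversion $\op{D}_\psi^{-1}\op{D}_\psi\phi=\phi$, set $\hat\phi\doteq \op{D}_\psi\phi$; then the $h$ attached to $\hat\phi$ in the second representation equals $h(z)=g^*(-Mz)$. Using invertibility of $M$ (which I would extract from Assumption~\ref{ass:M}, in particular the strict positivity in \er{ineq:M3} forcing $M$ to have trivial kernel), the change of variable $u=-Mz$ is a bijection of $\R^n$ and gives
\begin{align*}
h^*(-Mx)
\;=\;\sup_{z}\bigl\{-x^TMz-g^*(-Mz)\bigr\}
\;=\;\sup_{u}\bigl\{x^Tu-g^*(u)\bigr\}
\;=\;g^{**}(x)\;=\;g(x),
\end{align*}
the last equality being the Fenchel--Moreau biconjugate theorem applied to the closed proper convex $g$. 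Substituting back produces $(\op{D}_\psi^{-1}\op{D}_\psi\phi)(x)=\demi x^TMx+g(x)=\phi(x)$. The reverse composition $\op{D}_\psi\op{D}_\psi^{-1}\hat\phi=\hat\phi$ follows by the same manipulation, this time applying biconjugation to the convex $h$ associated to $\hat\phi$.

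The main obstacle I anticipate is not the algebra (which is routine Legendre--Fenchel bookkeeping) but rather the regularity hypotheses needed to apply Fenchel--Moreau. One must confirm that $g$ (and $h$) are closed, proper, and convex under the paper's $\R^-$-valued semiconvexity convention, and that the change of variable $u=-Mz$ is actually bijective, i.e. that $M$ is invertible. The first point is largely definitional once $\phi$ (resp. $\hat\phi$) is assumed to lie in the stated semiconvexity class; the second must be argued from Assumption~\ref{ass:M}, where \er{ineq:M3} together with symmetry of $M$ rules out a nontrivial kernel and hence yields invertibility of $M$.
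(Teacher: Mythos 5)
Your proposal is correct, and the first half --- the containments $\op{D}_\psi\,\op{S}_+^{-M}(\R^n)\subset\op{S}_-^{-M}(\R^n)$ and $\op{D}_\psi^{-1}\,\op{S}_-^{-M}(\R^n)\subset\op{S}_+^{-M}(\R^n)$ --- is essentially identical to the paper's argument: the same decomposition $g(x)=\phi(x)-\demi x^TMx$ (the paper calls it $\phi_+$), the same reduction of the max-plus integral to a classical conjugate evaluated at $-Mz$, and the same convexity/concavity bookkeeping. Where you genuinely diverge is the inverse property: the paper does not prove $\op{D}_\psi^{-1}\op{D}_\psi=\mathcal{I}$ at all, but instead cites Theorem 2.9 of the McEneaney reference and combines it with the containments to get bijectivity. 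Your direct route --- observing that the $h$ attached to $\hat\phi=\op{D}_\psi\phi$ is exactly $z\mapsto g^*(-Mz)$, changing variables via $u=-Mz$, and invoking Fenchel--Moreau --- is self-contained and makes explicit two hypotheses the paper leaves implicit: that $M$ is invertible (which you correctly extract from \er{ineq:M3}, since $Mv=0$ with $v\neq 0$ would contradict strict positive definiteness of $MB(\gamma^2 I-B^TMB)^{-1}B^TM$), and that $g$ must be closed and proper for biconjugation to apply. The latter is a real regularity caveat under the paper's $\R^-$-valued convention, and neither your sketch nor the paper fully discharges it, but the paper inherits the same caveat through its citation, so this is not a gap relative to the paper's own standard of rigor. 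In short: same approach for the range statements, a more elementary and self-contained (if slightly more demanding on regularity) argument for invertibility.
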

\begin{proof}
It is first shown that 
\begin{align}
\label{prof:thm-dual-a}
\phi\in\op{S}_+^{-M}(\R^n)&\Longrightarrow \op{D}_\psi\phi\in\op{S}_-^{-M}(\R^n),
\\
\hat\phi\in\op{S}_-^{-M}(\R^n)&\Longrightarrow \op{D}_\psi^{-1}\hat\phi\in\op{S}_+^{-M}(\R^n).
\label{prof:thm-dual-b}
\end{align}
For any $\phi\in\op{S}^{-M}_+(\R^n)$, by Definition \ref{def:semi-conv} of uniform semiconvexity with respect to $-M$, the function $\phi_
+:\R^n\rightarrow\R^-$ defined by $\phi_+(x)\doteq\phi(x)-\ts{\frac{1}{2}}x^TMx$ is convex on $\R^n$. By convex duality (e.g., \cite{BV:04}), the convex conjugate $\phi_+^\ast:\R^n\rightarrow\R^+$ defined by
\begin{align}
\label{eq:prof-dual-space-1}
\phi_+^\ast(\eta)=\int_{\R^n}^\oplus \eta^Tx\otimes(-\phi_+(x))\,dx
\end{align}
is convex on $\R^n$. Hence, by \er{eq:op-dual},
\begin{align}
\nn
&(\op{D}_\psi\phi)(z)=-\int_{\R^n}^\oplus\psi(x,z)\otimes (-\phi(x))\,dx
\\\nn
                              &\quad=-\max_{x\in\R^n}\left\{-z^TMx+\ts{\frac{1}{2}}x^TMx-\phi(x)\right\}-\ts{\frac{1}{2}}z^TMz
                             \\\nn
                              &\quad=-\max_{x\in\R^n}\left\{-z^TMx-\phi_+(x)\right\}-\ts{\frac{1}{2}}z^TMz
                             \\\nn
                             &\quad=-\phi_+^\ast(-Mz)-\ts{\frac{1}{2}}z^TMz.
\end{align}
 Thus, the function $\tilde\phi:\R^n\rightarrow\R^-$ defined by
$\tilde\phi(z)\doteq(\op{D}_\psi\phi)(z)+\ts{\frac{1}{2}}z^TMz= -\phi_+^\ast(-Mz), z\in\R^n$, is concave from \er{eq:prof-dual-space-1}. That is, $\op{D}_\psi\phi\in\op{S}^{-M}_-(\R^n)$ by Definition \ref{def:semi-conv}.

To show that \er{prof:thm-dual-b} holds, fix any $\hat\phi\in\op{S}^{-M}_-(\R^n)$.  By Definition \ref{def:semi-conv} of uniform semiconcavity with respect to $-M$, the function $\hat\phi_
-:\R^n\rightarrow\R^+$ defined by $\hat\phi_-(z)\doteq-(\hat\phi(z)+\ts{\frac{1}{2}}z^TMz), z\in\R^n$, is convex on $\R^n$. By convex duality, the convex conjugate $\hat\phi_-^\ast:\R^n\rightarrow\R^-$ defined by
\begin{align}
\label{eq:prof-dual-space-2}
{\hat\phi_-}^\ast(\xi)\doteq\int_{\R^n}^\oplus\xi^Tz-\hat\phi_-(z)\,dz  
\end{align}
is a convex function on $\R^n$. Hence, by \er{eq:op-inv-dual},
\begin{align}
\nn
&(\op{D}_\psi^{-1}\hat\phi)(x)=\int_{\R^n}^\oplus \psi(x,z)\otimes\hat\phi(z)\,dz
\\\nn
                                            &=\max_{z\in\R^n}\left\{ -x^TMz+\ts{\frac{1}{2}}z^TMz+\hat\phi(z)\right\}+\ts{\frac{1}{2}}x^TMx
                                            \\\nn
                                            &=\max_{z\in\R^n}\left\{ -x^TMz-\hat\phi_-(z)\right\}+\ts{\frac{1}{2}}x^TMx
                                            \\\nn
                                            &={\hat\phi}_-^\ast(-Mx)+\ts{\frac{1}{2}}x^TMx.
\end{align}
 Thus, the function $\bar\phi:\R^n\rightarrow\R^-$ defined by $\bar\phi(x)\doteq(\op{D}_\psi^{-1}\hat\phi)(x)-\ts{\frac{1}{2}}x^TMx={\hat\phi}_-^\ast(-Mx)$, $x\in\R^n$, is convex. That is, $\op{D}_\psi^{-1}\hat\phi\in\op{S}^{-M}_+(\R^n)$ by Definition \ref{def:semi-conv}.

The assertion that the operator $\op{D}_\psi^{-1}$ is the inverse of $\op{D}_\psi$ is proved in \cite[Theorem 2.9]{M:06}. Thus, for any $\hat\phi\in\op{S}_-^{-M}(\R^n)$, there exists an element $\phi\doteq\op{D}_\psi^{-1}\hat\phi\in\op{S}_{+}^{-M}(\R^n)$ such that $\op{D}_\psi\phi=\op{D}_\psi(\op{D}_\psi^{-1}\hat\phi)=\hat\phi$. This, together with \er{prof:thm-dual-a}, proves that $\op{D}_\psi$ is bijection from $\op{S}_+^{-M}(\R^n)$ to $\op{S}_-^{-M}(\R^n)$.
\end{proof}

For the purpose of studying solutions of the DRE \er{eq:DRE}, the domain of operator $\op{D}_\psi$  can be restricted to a space of quadratic functions specified by
\begin{align}
\label{def:Q+}
& \hspace{-3mm}
\mathcal{Q}_+^{-M}(\R^n) \doteq\left\{\phi:\R^n\rightarrow\R\biggl | \ba{c}\phi(x)=\ts{\frac{1}{2}}x^T\Omega x,\\\Omega\in\M^{n\times n}, \Omega> M\ea\right\}\!.
\end{align}
Given any $\phi\in\mathcal{Q}_+^{-M}$, the function $\check\phi:\R^n\rightarrow\R$ defined by $\check\phi(x)\doteq\phi(x)+\textstyle{\frac{1}{2}}x^T(-M)x=\ts{\frac{1}{2}}x^T(\Omega-M)x, x\in\R^n$, is convex on $\R^n$. This shows that $\mathcal{Q}_+^{-M}(\R^n)\subset\op{S}_+^{-M}(\R^n)$. Define the range of operator $\op{D}_\psi$ over the space $\mathcal{Q}_+^{-M}(\R^n)$ by
$
\mathsf{ran}(\op{D}_\psi)\doteq\left\{ \op{D}_\psi \phi\,| \phi\in\mathcal{Q}_+^{-M}(\R^n)\right\}.
$
In order to explicitly characterise $\mathsf{ran}(\op{D}_\psi)$, define a matrix operation $\Upsilon:\M^{n\times n}\rightarrow\M^{n\times n}$ by
\begin{align}
\label{def:Upsilon}
\Upsilon(\Omega)\doteq M(M-\Omega)^{-1}M-M
\end{align}
for $\Omega\in\M^{n\times n}$ such that $\Omega>M$. It can be verified directly that the inverse of $\Upsilon$ is 
\begin{align}
\label{def:Upsilon-inv}
\Upsilon^{-1}(\Omega)&\doteq -M(M+\Omega)^{-1}M+M=-\Upsilon(-\Omega)
\end{align}
for all $\Omega\in\M^{n\times n}$ such that $\Omega<-M$. 
Define 
\begin{align}
\label{def:Q-}
& \hspace{-3mm}
\mathcal{Q}_-^{-M}(\R^n)\doteq\left\{\phi:\R^n\rightarrow\R\biggl |\ba{c} \phi(x)=\ts{\frac{1}{2}}x^T\Upsilon(\Omega)x \\ \Omega\in\M^{n\times n}, \Omega>M\ea\right\}\!.
\end{align}

\begin{theorem}
\label{thm:dual-value}
The set $\mathcal{Q}_-^{-M}(\R^n)$ is the range of the operator $\op{D}_\psi$ over the space $\mathcal{Q}_+^{-M}(\R^n)$. That is, $\mathcal{Q}_-^{-M}(\R^n)=\mathsf{ran}(\op{D}_\psi)$.
\end{theorem}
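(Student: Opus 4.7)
The plan is to establish the set equality $\mathcal{Q}_-^{-M}(\R^n) = \mathsf{ran}(\op{D}_\psi)$ by double inclusion, with both directions reduced to a single explicit computation of $\op{D}_\psi$ applied to a quadratic. Since the excerpt already shows $\mathcal{Q}_+^{-M}(\R^n)\subset \op{S}_+^{-M}(\R^n)$ and Theorem \ref{thm:dual} guarantees that $\op{D}_\psi$ sends $\op{S}_+^{-M}(\R^n)$ into $\op{S}_-^{-M}(\R^n)$, no convex-duality machinery beyond unconstrained finite-dimensional quadratic optimisation will be needed; the content of the claim is the matrix identity packaged inside $\Upsilon$.

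For the forward inclusion $\mathsf{ran}(\op{D}_\psi)\subseteq \mathcal{Q}_-^{-M}(\R^n)$, I would take an arbitrary $\phi\in\mathcal{Q}_+^{-M}(\R^n)$ of the form $\phi(x)=\ts{\frac{1}{2}} x^T\Omega x$ with $\Omega>M$, substitute $\psi$ and $\phi$ into the definition \er{eq:op-dual}, and use $M=M^T$ to rewrite the bracketed integrand as the quadratic $\ts{\frac{1}{2}}x^T(M-\Omega)x-x^TMz+\ts{\frac{1}{2}}z^TMz$ in $x$. Because $\Omega>M$, the Hessian $M-\Omega$ is strictly negative definite, so the supremum is finite and uniquely attained at the stationary point $x^\ast=(M-\Omega)^{-1}Mz$. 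Substituting $x^\ast$ back, using the stationarity identity $(M-\Omega)x^\ast=Mz$ to collapse the quadratic term, and simplifying gives
\begin{align*}
(\op{D}_\psi \phi)(z) &= \ts{\frac{1}{2}}z^T\bigl[M(M-\Omega)^{-1}M - M\bigr] z = \ts{\frac{1}{2}}z^T\,\Upsilon(\Omega)\,z,
\end{align*}
which lies in $\mathcal{Q}_-^{-M}(\R^n)$ by definition \er{def:Q-}.

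For the reverse inclusion $\mathcal{Q}_-^{-M}(\R^n)\subseteq \mathsf{ran}(\op{D}_\psi)$, I would exhibit an explicit preimage: given any $\hat\phi(z)=\ts{\frac{1}{2}}z^T\Upsilon(\Omega)z$ with $\Omega>M$, take $\phi(x)=\ts{\frac{1}{2}}x^T\Omega x$. Then $\phi\in\mathcal{Q}_+^{-M}(\R^n)$ directly from \er{def:Q+}, and the computation already carried out in the forward step shows $\op{D}_\psi\phi=\hat\phi$, so $\hat\phi\in\mathsf{ran}(\op{D}_\psi)$.

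I expect the only mildly delicate step to be the algebraic simplification at $x^\ast$ that rearranges the optimal value into the matrix $M(M-\Omega)^{-1}M-M$ so as to expose $\Upsilon(\Omega)$; the hypothesis $\Omega>M$ is exactly what ensures $(M-\Omega)^{-1}$ exists and that the supremum is attained, so no coercivity or separate Legendre--Fenchel argument is required.
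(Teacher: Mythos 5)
Your proposal is correct and follows essentially the same route as the paper: both reduce the claim to the single explicit evaluation of $\op{D}_\psi$ on a quadratic $\phi(x)=\ts{\frac{1}{2}}x^T\Omega x$ with $\Omega>M$, yielding $\ts{\frac{1}{2}}z^T\Upsilon(\Omega)z$, from which both inclusions follow. Your write-up merely makes explicit the stationary-point computation and the ``vice versa'' direction that the paper leaves implicit.
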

\begin{proof}
For any $\phi\in\mathcal{Q}_+^{-M}(\R^n)$, let $\Omega\in\M^{n\times n}, \Omega>M$, be such that $\phi(x)=\frac{1}{2}x^T\Omega x$ for all $x\in\R^n$. From \er{eq:op-dual}, \er{def:Upsilon},
\begin{align}
\nn
& (\op{D}_\psi\phi)(z) = -\int_{\R^n}^\oplus \psi(x,z) \otimes \left( - \phi(x) \right)\, dx
\\\nn
          &=-\max_{x\in\R^n}\{\ts{\frac{1}{2}}(x-z)^T \! M(x-z)-\ts{\frac{1}{2}}x^T\Omega x\}
          =\ts{\frac{1}{2}}z^T\Upsilon(\Omega)z.
\end{align}
Thus, each element in $\mathsf{ran}(\op{D}_\psi)$ corresponds to an element in $\mathcal{Q}_-^{-M}(\R^n)$. That is, $\mathcal{Q}_-^{-M}(\R^n)=\mathsf{ran}(\op{D}_\psi)$.
\end{proof}

The following result is an immediate consequence of Theorems \ref{thm:dual} and \ref{thm:dual-value}. 
\begin{corollary}
\label{cor:duality}
The operator $\op{D}_\psi$ of \er{eq:op-dual} is a bijection from $\mathcal{Q}_+^{-M}(\R^n)$ to $\mathcal{Q}_-^{-M}(\R^n)$ with inverse $\op{D}_\psi^{-1}$ given by \er{eq:op-inv-dual}. 
\end{corollary}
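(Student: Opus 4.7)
The plan is to obtain the corollary as a direct consequence of the two preceding results by simply restricting the bijection of Theorem \ref{thm:dual} to the quadratic subclass $\mathcal{Q}_+^{-M}(\R^n)$ and invoking Theorem \ref{thm:dual-value} to pin down the image set. No new analytic work should be required; the argument is essentially bookkeeping about domain and range.

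First I would record the inclusion $\mathcal{Q}_+^{-M}(\R^n) \subset \op{S}_+^{-M}(\R^n)$, which is already observed just after the definition \er{def:Q+}: if $\phi(x)=\ts{\frac{1}{2}}x^T\Omega x$ with $\Omega>M$, then $x\mapsto \phi(x)+\ts{\frac{1}{2}}x^T(-M)x=\ts{\frac{1}{2}}x^T(\Omega-M)x$ is convex, so $\phi\in\op{S}_+^{-M}(\R^n)$ by Definition \ref{def:semi-conv}. Because $\op{D}_\psi$ is injective on the larger space $\op{S}_+^{-M}(\R^n)$ by Theorem \ref{thm:dual}, its restriction to $\mathcal{Q}_+^{-M}(\R^n)$ is also injective.

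Next I would appeal to Theorem \ref{thm:dual-value}, which identifies the image of $\mathcal{Q}_+^{-M}(\R^n)$ under $\op{D}_\psi$ as precisely $\mathcal{Q}_-^{-M}(\R^n)$. Together with the previous step, this yields that $\op{D}_\psi\colon \mathcal{Q}_+^{-M}(\R^n) \to \mathcal{Q}_-^{-M}(\R^n)$ is a bijection.

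Finally I would verify that the inverse is still $\op{D}_\psi^{-1}$ of \er{eq:op-inv-dual}. Given $\hat\phi\in \mathcal{Q}_-^{-M}(\R^n)$, Theorem \ref{thm:dual-value} produces a (unique) $\phi\in\mathcal{Q}_+^{-M}(\R^n)$ with $\op{D}_\psi\phi=\hat\phi$; applying $\op{D}_\psi^{-1}$ and using Theorem \ref{thm:dual} gives $\op{D}_\psi^{-1}\hat\phi=\op{D}_\psi^{-1}\op{D}_\psi\phi=\phi\in\mathcal{Q}_+^{-M}(\R^n)$, so $\op{D}_\psi^{-1}$ indeed maps $\mathcal{Q}_-^{-M}(\R^n)$ into $\mathcal{Q}_+^{-M}(\R^n)$ and inverts $\op{D}_\psi$ there. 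There is no real obstacle in this proof — the only thing one must be careful about is to invoke Theorem \ref{thm:dual-value} for surjectivity onto the specific quadratic set $\mathcal{Q}_-^{-M}(\R^n)$ rather than just into $\op{S}_-^{-M}(\R^n)$, so that the inverse can be restricted symmetrically.
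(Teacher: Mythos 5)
Your proposal is correct and matches the paper exactly: the paper states that the corollary ``follows immediately by combining Theorem \ref{thm:dual} and Theorem \ref{thm:dual-value},'' which is precisely the restriction-plus-image-identification argument you spell out. Your added care about surjectivity onto $\mathcal{Q}_-^{-M}(\R^n)$ (rather than merely into $\op{S}_-^{-M}(\R^n)$) is the right detail to make the restriction of $\op{D}_\psi^{-1}$ legitimate.
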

In view of definitions \er{eq:op-dual} and \er{eq:op-inv-dual}, Theorem \ref{thm:dual-value}, and Corollary \ref{cor:duality}, $\mathcal{Q}^{-M}_+(\R^n)$ is referred to as a {\em max-plus primal space} and $\mathcal{Q}_-^{-M}(\R^n)$ is referred to as a {\em max-plus dual space}. The dynamic programming evolution operator $\op{S}_k$ of \er{eq:op-DPP-k} propagates the value function $W_k$ of \er{eq:value} in the  
max-plus primal space $\mathcal{Q}_+^{-M}(\R^n)$. The domain of $\op{S}_k, k\in\N$, is defined by
\begin{align}
\label{def:dom-Sk}
& \hspace{-2mm}
\mathsf{dom}(\op{S}_k)\!\doteq\!\left\{\!\ba{c}
							\phi\in\mathcal{Q}_+^{-M}(\R^n) \\
							\phi(x)=\ts{\frac{1}{2}}x^T\Omega x
					\ea  \bigg | \ba{cc} 
\Omega\in\M^{n\times n} \text{ such}\\ \text{that }\op{R}_k(\Omega)~\text{exists}
 \ea \!
 \right\}\!.
\end{align}

In order to show that the value function $W_k$ stays in $\mathcal{Q}_+^{-M}(\R^n)$ for any horizon $k\in\N$ and any terminal payoff $W_0\in\mathcal{Q}_+^{-M}(\R^n)$, the following monotonicity property of the Riccati operator $\op{R}_k$ from \cite{ZD4:13} is useful. 
\begin{lemma}
\label{lem:mono-Rk}
Suppose that solutions $P_k^1\doteq\op{R}_k(P_0^1), P_k^2\doteq\op{R}_k(P_0^2)$  of DRE \er{eq:DRE} exist at time $k\in\Z_{\ge0}$ corresponding to initial conditions  $P_0^1,P_0^2\in\M^{n\times n}$ . Then,
\begin{align}
\label{eq:lem-mon-Rk}
P_0^1\le P_0^2\Longrightarrow P_k^1=\op{R}_k(P_0^1)\le\op{R}_k(P_0^2)=P_k^2.
\end{align} 
\end{lemma}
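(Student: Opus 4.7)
The plan is to exploit the interpretation of $P_k$ as the Hessian of the LQR value function $W_k$ recalled in Section \ref{sec:LQR-DRE}: the standard LQR identification asserts that whenever $\op{R}_k(P_0)$ exists in the sense of \er{eq:DRE} (i.e.\ the invertibility conditions on $\gamma^2 I - B^T \op{R}_i(P_0) B$ hold for $i=0,\dots,k-1$), the value function \er{eq:value} satisfies $W_k(x) = \tfrac{1}{2}\, x^T \op{R}_k(P_0)\, x$ for every $x\in\R^n$. Under the hypotheses of the lemma this identification is available for both initial conditions $P_0^1$ and $P_0^2$ simultaneously.

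Carrying this out, observe that the only term in the total payoff \er{eq:payoff} depending on $P_0$ is the terminal contribution $\tfrac{1}{2} x_k^T P_0 x_k$. Hence for every $x\in\R^n$, every admissible input $w_{0,k-1}\in\cW[0,k-1]$, and the corresponding trajectory $x_{0,k}$ of \er{eq:system},
\begin{align}
\nn
J_k(x; w_{0,k-1})\Big|_{P_0 = P_0^1} - J_k(x; w_{0,k-1})\Big|_{P_0 = P_0^2}
= \tfrac{1}{2}\, x_k^T\, (P_0^1 - P_0^2)\, x_k \le 0,
\end{align}
because $P_0^1 - P_0^2 \le 0$ by hypothesis. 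Taking the supremum over $w_{0,k-1}\in\cW[0,k-1]$ on both sides preserves the pointwise ordering, giving $W_k^1(x) \le W_k^2(x)$ for all $x\in\R^n$. Substituting the LQR identification $W_k^j(x) = \tfrac{1}{2} x^T\op{R}_k(P_0^j) x$ for $j=1,2$ and using arbitrariness of $x$ yields $\op{R}_k(P_0^1) \le \op{R}_k(P_0^2)$, which is \er{eq:lem-mon-Rk}.

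The main obstacle I anticipate is ensuring that the LQR identification $W_k = \tfrac{1}{2} x^T \op{R}_k(P_0) x$ is genuinely valid under the mere existence hypothesis stated in the lemma rather than under a stronger stabilisability or sign-definiteness assumption. Since the DRE here is indefinite (the Popov matrix $\Pi$ is indefinite), the finiteness of the supremum in \er{eq:value} is not automatic, and the LQR identification must be invoked as a classical fact (e.g.\ \cite{AM:89}) that applies precisely when the successive inverses $(\gamma^2 I - B^T \op{R}_i(P_0) B)^{-1}$ exist for $i=0,\dots,k-1$, which is exactly our existence hypothesis. A purely algebraic alternative would induct on $k$ and establish monotonicity of the one-step operator $\op{R}$ directly by a Schur-complement manipulation of $\op{R}(P_0^2) - \op{R}(P_0^1)$, but this route is considerably more computational and obscures the simple dynamic-programming origin of the monotonicity.
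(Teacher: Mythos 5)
Your proof is correct and takes essentially the same route as the paper's: both identify $\op{R}_k(P_0^i)$ with the Hessian of the value function $W_k^i$, observe that only the terminal term $\tfrac{1}{2}x_k^T P_0 x_k$ of the payoff depends on $P_0$, and pass the resulting pointwise ordering of payoffs through the supremum over input sequences before invoking arbitrariness of $x$. The paper simply writes the inequality directly between the two suprema rather than first subtracting the payoffs, but the argument is the same.
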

\if{false}
\begin{proof}
Let $W_k^i, i=1,2$ denote the value function \er{eq:value} at time $k\in\Z_{\ge0}$ corresponding to initial conditions $W_0^i(x)=\frac{1}{2} x^T P_0^i x$, $x\in\R^n$, where $P_0^i$, $i=1,2$ are as per the Lemma statement. Then, by definition  \er{eq:op-Rk} of $\op{R}_k$, $W_k^i(x)=\frac{1}{2} x^T \op{R}_k(P_0^i) x,x\in\R^n$. Fix any $x\in\R^n$. Applying definition \er{eq:value} of the value function $W_k^i$ yields
 \begin{align}
 \nn
 &\ts{\frac{1}{2}}x^T P_k^1x=\ts{\frac{1}{2}}x^T\op{R}_k(P_0^1)x=W_k^1(x)
 \\\nn
 &\quad\quad=\sup_{w_{0,k-1}\in(\R^m)^k}\left\{\ba{c}\ts{\ds{\sum_{i=0}^{k-1}}}\left( {\frac{1}{2}}x_i^T\Phi x_i-\frac{1}{2}\gamma^2w_i^Tw_i\right)\\+{\frac{1}{2}}x_k^T P_0^1x_k\ea\right\}
 \\\nn
 &\quad\quad\le\sup_{w_{0,k-1}\in(\R^m)^k}\left\{\ba{c}\ts{\ds{\sum_{i=0}^{k-1}}}\left( {\frac{1}{2}}x_i^T\Phi x_i-\frac{1}{2}\gamma^2w_i^Tw_i\right)\\+{\frac{1}{2}}x_k^T P_0^2x_k\ea\right\}
 \\\nn
 &\quad\quad=W_k^2(x)=\ts{\frac{1}{2}}x^T\op{R}_k(P_0^2)x=\ts{\frac{1}{2}}x^TP_k^2x.
 \end{align} 
 Hence, as $x\in\R^n$ is arbitrary, the right-hand side inequality of \er{eq:lem-mon-Rk} follows. 
 \end{proof}

\fi
\begin{theorem}
\label{thm:value-space}
Suppose that Assumption \ref{ass:M} holds. Fix any $k\in\Z_{\ge0}$, and any initial value function $W_0\in\mathsf{dom}(\op{S}_k)$ of the form \er{eq:W0}, with $P_0>M$. Then, the value function $W_k=\op{S}_kW_0$ satisfies $W_k\in\mathcal{Q}_+^{-M}(\R^n)$. 
\end{theorem}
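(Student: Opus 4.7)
The plan is to combine the classical LQR characterisation of the value function with the monotonicity Lemma \ref{lem:mono-Rk} and Assumption \ref{ass:M} to verify the quadratic form structure together with the strict inequality $P_k > M$ required by the definition \er{def:Q+} of $\mathcal{Q}_+^{-M}(\R^n)$.

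First I would invoke the standard LQR result (mentioned just after \er{eq:payoff}) that $W_k(x) = \tfrac{1}{2}x^T P_k x$ with $P_k = \op{R}_k(P_0)$, which is well defined since the hypothesis $W_0\in\mathsf{dom}(\op{S}_k)$ exactly encodes the existence of $\op{R}_k(P_0)$ via \er{def:dom-Sk}. So the task reduces to showing $P_k > M$.

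Second, I would examine the particular Riccati orbit initialised at $M$ itself. Inequality \er{ineq:M2} of Assumption \ref{ass:M} guarantees that $\op{R}_k(M)$ exists for every $k\in\Z_{\ge0}$, so the sequence $\{\op{R}_k(M)\}$ is well defined. Inequality \er{ineq:M} gives the one-step strict improvement $\op{R}(M) > M$. Applying Lemma \ref{lem:mono-Rk} to this strict inequality yields, for every $k\ge 1$,
\begin{align*}
\op{R}_k(M) \;=\; \op{R}_{k-1}(\op{R}(M)) \;\ge\; \op{R}_{k-1}(M),
\end{align*}
so the orbit $\{\op{R}_k(M)\}$ is monotone nondecreasing in $k$. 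In particular $\op{R}_k(M) \ge \op{R}(M) > M$ for all $k\ge 1$, with strict inequality inherited from the base step \er{ineq:M}.

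Third, I would combine these two facts. Since $P_0 > M$ by hypothesis, Lemma \ref{lem:mono-Rk} gives $P_k = \op{R}_k(P_0) \ge \op{R}_k(M)$ (using that both iterates exist). For $k\ge 1$ this yields $P_k \ge \op{R}_k(M) > M$, and for $k=0$ the inequality $P_0 > M$ is given. Hence $P_k > M$ for all relevant $k$, so $W_k\in\mathcal{Q}_+^{-M}(\R^n)$ by \er{def:Q+}.

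The only subtle step is preserving the strict inequality: Lemma \ref{lem:mono-Rk} by itself returns only weak monotonicity, so the argument must route the strictness through the base-case gap $\op{R}(M) - M > 0$ supplied by \er{ineq:M} and then use $P_k \ge \op{R}_k(M) \ge \op{R}(M) > M$ to chain strict and weak inequalities correctly. No delicate spectral computation or additional invertibility check is needed beyond what Assumption \ref{ass:M} and the domain condition \er{def:dom-Sk} already provide.
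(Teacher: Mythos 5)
Your proof is correct and follows essentially the same route as the paper's: reduce to showing $\op{R}_k(P_0)>M$, use Lemma \ref{lem:mono-Rk} to obtain $\op{R}_k(P_0)\ge\op{R}_k(M)$, and bootstrap strictness from the base inequality $\op{R}(M)>M$ of \er{ineq:M} by iterating the (weakly) monotone Riccati operator along the orbit initialised at $M$. Your explicit remark about where the strict inequality enters (since Lemma \ref{lem:mono-Rk} only yields weak monotonicity) is a point the paper handles implicitly, but the argument is the same.
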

\begin{proof}
From the definition \er{def:dom-Sk} of $\mathsf{dom}(\op{S}_k)$, the solution $\op{R}_k(P_0)$ exists at time $k$ and the value function $W_k(x)=\frac{1}{2}x^T\op{R}_k(P_0)x, x\in\R^n$. Since $P_0>M$, it follows that $\op{R}_k(P_0)\ge\op{R}_k(M)$ from the monotonicity of the operator $\op{R}_k$ from Lemma \ref{lem:mono-Rk}. Applying the Riccati operator $\op{R}$ to both sides of the inequality $\op{R}(M)>M$ yields $\op{R}_2(M)=\op{R}(\op{R}(M))\ge\op{R}(M)>M$ by Assumption \ref{ass:M} and Lemma \ref{lem:mono-Rk}. Repeating the process yields $\op{R}_k(M)>M$ . Hence, $\op{R}_k(P_0)\ge\op{R}_k(M)>M$. Thus, the value function $W_k(x)=\frac{1}{2}x^T\op{R}_k(P_0)x, x\in\R^n,$ belongs to $\mathcal{Q}_+^{-M}(\R^n)$ according to \er{def:Q+}.
\end{proof}

\begin{remark}
\label{rmk:value-space}
Theorem \ref{thm:value-space} implies that $W_k\in\mathcal{Q}_+^{-M}(\R^n)$ at horizon $k\in\Z_{\ge0}$ if $W_0\in\mathsf{dom}(\op{S}_k)\subset\mathcal{Q}_+^{-M}(\R^n)$. Consequently, Theorem \ref{thm:dual} implies that the max-plus dual $\widehat{W}_k\doteq\op{D}_\psi W_k$ exists. Theorem \ref{thm:dual-value} subsequently implies that $\widehat{W}_k\in\mathcal{Q}_-^{-M}(\R^n)$ with a representation $\widehat{W}_k(z)=\frac{1}{2}z^T\Upsilon(\op{R}_k(P_0))z$, for all $z\in\R^n$, where the operator $\Upsilon:\R^{n\times n}\rightarrow \R^{n\times n}$ is as defined in \er{def:Upsilon}.Ä
\end{remark}

\section{Max-plus dual space fundamental solution semigroup}
\label{sec:mp-dual}
Inspired by the continuous time analysis of \cite{M:08}, and the infinite dimensional analysis of \cite{DM1:11},  \cite{DM2:12}, \cite{DM1:12}, a new max-plus dual space fundamental solution semigroup for the corresponding discrete time DRE \er{eq:DRE} has recently been developed, see \cite{ZD3:13}. With the aim of providing context for the new max-plus primal space fundamental solution semigroup presented in this paper, the development in \cite{ZD3:13} is summarised below.

Define an auxiliary value function $\widehat{\mathrm{S}}_k:\R^n\times\R^n\rightarrow\R$ by applying the operator  $\op{S}_k$ to the functions $\psi(\cdot,z)$
\begin{align}
\label{eq:hatSk}
\widehat{\mathrm{S}}_k(x,z)&\doteq \left(\op{S}_k\psi(\cdot,z)\right)(x).
\end{align}
From \cite[Theorem 3.1]{ZD3:13}, $\widehat{\mathrm{S}}_k$ is a quadratic of the form 
\begin{align}
\label{eq:quadr-hatSk}
\widehat{\mathrm{S}}_k(x,z)&=\frac{1}{2}\left[\ba{c} x\\z \ea\right]^T Q_k \left[\ba{c} x\\z\ea\right],
\end{align}
in which the Hessian $Q_k$ may be expressed in block form by $Q_k=\left[\ba{cc}Q_{k}^{11}&Q_{k}^{12}\\ Q_{k}^{21}& Q_{k}^{22}\ea\right]\in\M^{2n\times 2n}$. Theorem 3.1 in \cite{ZD3:13} shows that the matrices $Q_k$ may be generated iteratively for all $k\in\Z_{\ge0}$ by\begin{align}
\label{eq:dynamics-Q}
Q_{k+1}^{11}&=\op{R}(Q_k^{11}),
\\\nn
Q_{k+1}^{12}&=A^TQ_{k}^{12}+A^TQ_{k}^{11}B(\gamma^2I-B^TQ_{k}^{11}B)^{-1}B^TQ_{k}^{12},
\\\nn
Q_{k+1}^{21}&=(Q_{k+1}^{12})^T,
\\\nn
Q_{k+1}^{22}&=Q_{k}^{22}+Q_{k}^{21}B(\gamma^2I-B^TQ_{k}^{11}B)^{-1}B^TQ_{k}^{12},
\end{align}
with initial condition
$
Q_0=\left[\ba{cc} M &-M\\-M&M\ea\right].
$
From the first equation in \er{eq:dynamics-Q}, each element of $\{Q_k^{11}, k\in\Z_{\ge0}$\} satisfies DRE \er{eq:DRE} with $Q_0^{11}=M$. That is,  $Q_k^{11}=\op{R}_k(M)$ for any $k\in\Z_{\ge0}$.  Inequality \er{ineq:M2} in Assumption \ref{ass:M} implies that $Q_k^{11}$ exists for all $k\in\N$.  Hence $Q_k$ exists for all $k\in\N$ by \er{eq:dynamics-Q}. As shown in Theorem \ref{thm:value-space}, it follows that $Q_k^{11}>M$ for $k\in\N$, from inequality \er{ineq:M} in Assumption \ref{ass:M}.  This implies that the operator $\op{D}_\psi$ of \er{eq:op-dual} can be applied to $\widehat{\mathrm{S}}_k(\cdot, z)$ of \er{eq:hatSk}, for each $z\in\R^n$, to yield a function $\mathrm{B}_k(\cdot, z):\R^n\rightarrow\R$ defined by
\begin{align}
\label{eq:Bk}
\mathrm{B}_k(y,z)&\doteq (\op{D}_\psi\widehat{\mathrm{S}}_k(\cdot,z))(y)
\\\nn
&=-\int_{\R^n}^\oplus\psi(x,y)\otimes(-\widehat{\mathrm{S}}_{k}(x,z))\,dx
\end{align}
for all $y\in\R^n$. In \cite{M:08}, \cite{ZD3:13}, the function $\mathrm{B}_k:\R^n\times\R^n\rightarrow\R$ is used to define a max-plus linear max-plus integral operator  $\op{B}_k$ by
\begin{align}
\label{eq:op-Bk}
(\op{B}_k\,a)(y)\doteq\int_{\R^n}^\oplus \mathrm{B}_k(y,z)\otimes a(z)\,dz
\end{align}
for all $y\in\R^n$. It may be shown \cite{M:08} that $\op{B}_k$ and $\op{S}_k$ of \er{eq:op-DPP-k} are related by
\begin{align}
\label{eq:op-Bk-simi}
\op{B}_k=\op{D}_\psi\,\op{S}_k\op{D}_\psi^{-1}
\end{align}
for all $k\in\N$. From Corollary \ref{cor:duality} and Theorem \ref{thm:value-space},  $\op{D}_\psi$ is a bijection from $\mathcal{Q}_+^{-M}(\R^n)$ to $\mathcal{Q}_-^{-M}(\R^n)$ with inverse $\op{D}_\psi^{-1}$, and $\op{S}_k$ is a map from $\mathcal{Q}_+^{-M}{(\R^n)}$ to $\mathcal{Q}_+^{-M}{(\R^n)}$. Consequently, \er{eq:op-Bk-simi} implies that $\op{B}_k$ is a map from $\mathcal{Q}_-^{-M}{(\R^n)}$ to $\mathcal{Q}_-^{-M}{(\R^n)}$. Since $\{\op{S}_k, k\in\N\}$ defines a semigroup \cite{ZD3:13}, it follows that $\{\op{B}_k, k\in\Z_{\ge0}\}$ also defines a semigroup by inspection of \er{eq:op-Bk-simi}. Let $\widehat{W}_k=\op{D}_\psi W_k, k\in\Z_{\ge0}$, be the max-plus dual of the value function $W_k$. Applying \er{eq:op-Bk-simi},
\begin{align}
\label{eq:prop-dual}
\op{B}_k\widehat{W}_0&=\op{B}_k(\op{D}_\psi W_0)=(\op{D}_\psi\,\op{S}_k\op{D}_\psi^{-1})(\op{D}_\psi W_0)
\\\nn
                                     &=\op{D}_\psi(\op{S}_k W_0)=\op{D}_\psi W_k=\widehat{W}_k.
\end{align}
That is, $\{\op{B}_k, k\in\Z_{\ge0}\}$ defines a semigroup of max-plus linear max-plus integral operators that propagate the max-plus dual of the value function with respect to the time horizon $k\in\Z_{\ge0}$. This provides an alternative way of propagating any terminal payoff $W_0\in\mathcal{Q}_+^{-M}(\R^n)$ to its corresponding value function $W_k$ at time $k\in\Z_{\ge0}$. This is summarised via the commutation diagram of Figure \ref{fig:propagations} and the following steps:
\begin{center}
\vspace{2mm}
\parbox[c]{8cm}{\centering
\begin{itemize}
\item[\done]
Map the terminal payoff $W_0$ into the dual space $\mathcal{Q}_-^{-M}(\R^n)$ by $\widehat{W}_0=\op{D}_\psi W_0$.

\item[\dtwo]
Propagate $\widehat{W}_0$ via $\op{B}_k$ to $\widehat{W}_k=\op{B}_k\widehat{W}_0$.

\item[\dthree]
Recover the value function $W_k=\op{D}_\psi^{-1}\widehat{W}_k$ via the inverse dual operator $\op{D}_\psi^{-1}$ of \er{eq:op-inv-dual}.

\end{itemize}
\vspace{2mm}
}
\end{center}
\begin{figure}[h]
\begin{center}
\begin{tikzpicture}[node distance=2cm, auto]
  \node (C) {$W_0$};
  \node(D)[right of=C,node distance=3cm]{$W_k$};
  \node (P) [below of=C,node distance=2cm] {$\widehat{W}_0$};
  \node (Ai) [right of=P,node distance=3cm] {$\widehat{W}_k$};
  \draw[->] (C) to node [swap] {$\op{D}_\psi$} (P);
  \draw[->] (C) to node []{$\op{S}_k$} (D);
  \draw[->] (P) to node [] {$\op{B}_k$} (Ai);
   \draw[->](Ai) to node []{$\op{D}_\psi^{-1}$}(D);
\end{tikzpicture}
\end{center}
\caption{\footnotesize  Commutation diagram for propagation of  $W_k$ by $\op{S}_k$ of \er{eq:op-DPP-k} or by $\op{B}_{k}$ of \er{eq:op-Bk}.}
\label{fig:propagations}
\end{figure}

By inspection of \er{eq:op-Bk} and \er{eq:op-Bk-simi}, the set of kernels $\{\mathrm{B}_k, k\in\N\}$ defined via \er{eq:Bk} also define a semigroup. In this case, the associate binary operation used for propagation to longer time horizons is the max-plus convolution
\begin{align}
\label{eq:propag-Bk}
\mathrm{B}_{k_1+k_2}(y,z)=\int_{\R^n}^\oplus\mathrm{B}_{k_1}(y,\rho)\otimes \mathrm{B}_{k_2}(\rho,z)\,d\rho
\end{align}
for any $k_1,k_2\in\N$ and $y,z\in\R^n$. Furthermore, it has been shown \cite{ZD3:13} that $\mathrm{B}_k$ is a quadratic of the form
\begin{align}
\label{eq:Thetak}
\mathrm{B}_k(y,z)=\frac{1}{2}\left[\ba{c}y\\ z\ea\right]^T\Theta_k \left[\ba{c}y\\ z\ea\right],
\end{align}
in which the Hessian $\Theta_k$ may be expressed in block form by $\Theta_k= \left[\ba{cc}\Theta_{k}^{11}&\Theta_{k}^{12}\\\Theta_{k}^{21}&\Theta_{k}^{22}\ea\right] \in\M^{2n\times 2n}$. By inspection of \er{eq:propag-Bk} and \er{eq:Thetak}, the set of Hessians $\{\Theta_k, k\in\N\}$ also defines a semigroup, with a corresponding associative binary operation specified by a matrix operation $\circledast$ defined by
\begin{align}
\label{eq:propaga-Theta}
&\Theta_{k_1+k_2}=\Theta_{k_1}\circledast\Theta_{k_2}
\\\nn
&\doteq\left[\ba{cc} \Theta_{k_1}^{11}-\Theta_{k_1}^{12}\Pi_{(k_1,k_2)}^{-1}\Theta_{k_1}^{21} &-\Theta_{k_1}^{12}\Pi_{(k_1,k_2)}^{-1}\Theta_{k_2}^{12}\\-\Theta_{k_2}^{21}\Pi_{(k_1,k_2)}^{-1}\Theta_{k_1}^{21}&\Theta_{k_2}^{22}-\Theta_{k_2}^{21}\Pi_{(k_1,k_2)}^{-1}\Theta_{k_2}^{12} \ea\right],
\end{align}
for all $k_1,k_2\in\N$, in which $\Pi_{(k_1,k_2)}\doteq\Theta_{k_1}^{22}+\Theta_{k_2}^{11}$ (see \cite{ZD3:13}). Conditions that guarantee that $\Pi_{(k_1,k_2)}>0$ for all $k_1,k_2\in\N$ are given in Theorem 4.2 of \cite{ZD3:13}. Hessian propagation via \er{eq:propaga-Theta} is initialised with $\Theta_1=\left[\ba{cc}\Theta_1^{11}&\Theta_1^{12}\\\Theta_1^{21}&\Theta_1^{22}\ea\right]$, where an explicit calculation of $\mathrm{B}_1$ via \er{eq:Bk}, and an application of \er{eq:Thetak}, yields
\begin{align}
\label{eq:Theta1}
\Theta_1^{11}&=M(M-Q_1^{11})^{-1}M-M,
\\\nn
\Theta_1^{12}&=M(M-Q_1^{11})^{-1}Q_1^{12},
\\\nn
\Theta_1^{21}&=Q_1^{21}(M-Q_1^{11})^{-1}M,
\\\nn
\Theta_1^{22}&=Q_1^{21}(M-Q_1^{11})^{-1}Q_1^{12}+Q_1^{22}.
\end{align}
Here, $Q_1=\left[\ba{cc}Q_1^{11}&Q_1^{12}\\Q_1^{21}&Q_1^{22}\ea\right]$ is as per \er{eq:dynamics-Q}, with
\begin{align}
\nn
Q_1^{11}&= \Phi+A^TMA+A^TMB(\gamma^2\,I-B^TMB)^{-1}B^TMA,
\\\nn
Q_1^{12}&=-A^TM-A^TMB(\gamma^2\,I-B^TMB)^{-1}B^TM,
\\\label{eq:Q1}
Q_1^{21}&= -MA-MB(\gamma^2\,I-B^TMB)^{-1}B^TMA,
\\\nn
Q_1^{22}&=M+MB(\gamma^2\,I-B^TMB)^{-1}B^TM.
\end{align}
The semigroup $\{\Theta_k, k\in\N\}$ defined via \er{eq:propaga-Theta} is referred to here as the max-plus dual space fundamental solution semigroup for the DRE \er{eq:DRE}.

From Remark \ref{rmk:value-space}, the max-plus dual of the value function $\widehat{W}_k$ is a quadratic $\widehat{W}_k(z)=\frac{1}{2}z^T\Upsilon(P_k) z, z\in\R^n$, where $P_k=\op{R}_k(P_0)$. Denote  the Hessian of 
$\widehat{W}_k$ by $
O_k\doteq\Upsilon(P_k).
$
Applying \er{eq:op-Bk} and \er{eq:prop-dual} yields
 \begin{align}
 \nn
& \widehat{W}_k(z)=\ts{\frac{1}{2}}z^T O_k z=\int_{\R^n}^\oplus \mathrm{B}_k(z, \rho)\otimes \widehat{W}_0(\rho) \, d\rho
\\\nn
                            &=\int_{\R^n}^\oplus  \frac{1}{2}\left[\ba{c}z\\ \rho\ea\right]^T \left[\ba{cc}\Theta_{k}^{11}&\Theta_{k}^{12}\\\Theta_{k}^{21}&\Theta_{k}^{22}\ea\right]  \left[\ba{c}z\\ \rho\ea\right]  \otimes \frac{1}{2} \rho^T O_0 \rho\,d\rho
\\\nn
         &=\frac{1}{2} z^T (\Theta_k^{11}-\Theta_k^{12}(O_0+\Theta_k^{22})^{-1}\Theta_k^{21}) z
\end{align}
for all $z\in\R^n$ and $k\in\N$. Thus, the Hessian $O_k$ of the max-plus dual $\widehat{W}_k$ can be computed for any $k\in \N$ via the max-plus dual space fundamental solution semigroup $\{\Theta_k, k\in\N\}$  by 
\begin{align}
\label{eq:prop-fund}
O_k=\Psi^{d}_k(O_0),
\end{align}
where the operation $\Psi_k^d:\M^{n\times n}\rightarrow \M^{n\times n}$ for each $k\in\N$ is defined by
\begin{align}
\label{eq:math-F}
\Psi^d_k(\Omega)\doteq\Theta_k^{11}-\Theta_k^{12}(\Omega+\Theta_k^{22})^{-1}\Theta_k^{21}.
\end{align} 
Note that $O_0+\Theta_k^{22}<0$ is necessary for the representation \er{eq:prop-fund}, see \cite{ZD4:13}.  From Remark \ref{rmk:value-space}, $O_k$ and $P_k$ are related by the operation $\Upsilon$ of \er{def:Upsilon} and $\Upsilon^{-1}$ of \er{def:Upsilon-inv} via $O_k=\Upsilon(P_k)$ and $P_k=\Upsilon^{-1}(O_k)$, respectively. The representation of solution  $P_k=\op{R}_k(P_0)$ of DRE \er{eq:DRE} via the max-plus dual space fundamental solution semigroup $\{\Theta_k, k\in\N\}$ is then given by 
\begin{align}
\label{eq:dual-fund}
O_0=\Upsilon(P_0),
\,\,
O_k=\Psi_k^{d}(O_0),
\,\,
P_k=\Upsilon^{-1}(O_k)
\end{align}
for all $k\in\N$. The max-plus dual space fundamental solution semigroup $\{\Theta_k, k\in\N\}$ can be computed using the propagation rule \er{eq:propaga-Theta} initialised with $\Theta_1$ given in \er{eq:Theta1}. After $\{\Theta_k, k\in\N\}$ is computed, a solution $P_k=\op{R}_k(P_0)$ of DRE \er{eq:DRE} corresponding to any allowable initial condition $P_0>M$  can be obtained directly using formula \er{eq:dual-fund}. Note that $\{\Theta_k, k\in\N\}$ only needs to be computed once, and its computation is independent of the initial condition $P_0$. 

\section{Max-plus primal space fundamental solution semigroup}
\label{sec:mp-primal}

Equation \er{eq:dual-fund} provides a representation of solutions  $P_k=\op{R}_k(P_0)$ of DRE \er{eq:DRE} via the max-plus dual space fundamental solution semigroup $\{\Theta_k, k\in\N\}$. In this section, a new max-plus fundamental solution semigroup is developed that allows a simpler representation of the solution $P_k=\op{R}_k(P_0)$.

From \er{eq:dynamics-Q}, matrices $Q_k^{22}, k\in\Z_{\ge0}$, satisfy the iteration
\begin{align}
\nn
Q_{k+1}^{22}=Q_{k}^{22}+Q_{k}^{21}B(\gamma^2I-B^TQ_{k}^{11}B)^{-1}B^TQ_{k}^{12}
\end{align}
with initial condition $Q_0^{22}=M$. Since $Q_k^{11}=\op{R}_k(M), k\in\Z_{\ge0}$, it follows that $\gamma^2\,I-B^TQ^{11}_kB>0$ by inequality \er{ineq:M} for all $k\in\Z_{\ge0}$. Thus, $Q_{k+1}^{22}\ge Q_k^{22}$, for all $k\in\Z_{\ge0}$. Inequality \er{ineq:M2} implies that 
\begin{align}
\nn
Q_1^{22}&=Q_{0}^{22}+Q_{0}^{21}B(\gamma^2I-B^TQ_{0}^{11}B)^{-1}B^TQ_{0}^{12}
\\\label{eq:mono-Q22}
&=M+MB(\gamma^2\,I-B^TMB^T)^{-1}B^TM
\\\nn
& >M.
\end{align}
Here, $Q_0^{21}=Q^{12}_0=-M$ is used, as per \er{eq:dynamics-Q}. Thus, $Q_k^{22}>M$ for all $k\in\N$. Consequently, the operator $\op{D}_\psi$ of \er{eq:op-dual} can be applied to the function $\widehat{\mathrm{S}}_k(x,\cdot)$ of \er{eq:hatSk}, for each $x\in\R^n$, to yield a new function $\mathrm{S}_k(x,\cdot):\R^n\rightarrow\R$
\begin{align}
\label{eq:kernel-primal}
& \hspace{-2mm} \mathrm{S}_{k}(x,y)\doteq\left(\op{D}_\psi\widehat{\mathrm{S}}_{k}(x,\cdot)\right)(y)
\\\nn
&
=-\int_{\R^n}^\oplus\psi(z,y)\otimes(-\widehat{\mathrm{S}}_{k}(x,z))\,dz
\end{align}
for all $x,y\in\R^n$ and $k\in\N$.
\if{false}

\begin{remark}
\label{rmk:mp-Green}
In \cite{dm3:13}, the concept of max-plus Green's function was proposed. It can be shown that the function $\mathrm{S}_k$ of \er{eq:kernel-primal} can be interpreted as a max-plus Green's function, with
\begin{align}
\mathrm{S}_k(x,y)=(\op{S}_k\delta_y)(x)
\end{align}
for all $x,y\in\R^n$, where the max-plus delta function $\delta_y:\R^n\rightarrow\R^-$ centred at $y\in\R^n$ is defined by 
$
\delta_y(x)\doteq\left\{\ba{cc} 0,&x=y, \\-\infty, & x\neq y,\ea\right.
$
for all $x\in\R^n$.
\end{remark}

\fi
Applying the inverse dual operator $\op{D}^{-1}_\psi$ of \er{eq:op-inv-dual} to $\mathrm{S}_k(x,\cdot)$ yields  a representation of $\widehat{\mathrm{S}}_{k}$ in terms of $\mathrm{S}_{k}$, with
\begin{align}
\label{eq:kernel-primal-inv}
\widehat{\mathrm{S}}_{k}(x,z)&=\left(\op{D}^{-1}_\psi\mathrm{S}_{k}(x,\cdot)\right)(z)
= \! \int_{\R^n}^\oplus \!\! \psi(z,y)\otimes\mathrm{S}_{k}(x,y)\,dy.
\end{align}

Define a max-plus integral operator by
\begin{align}
\label{eq:op-tildeS}
(\widetilde{\op{S}}_k\phi)(x)\doteq\int_{\R^n}^\oplus \mathrm{S}_{k}(x,y)\otimes\, \phi(y)\,dy
\end{align}
for all $x\in\R^n, k\in\N$, and $\phi\in\mathcal{Q}_+^{-M}(\R^n)$ such that the max-plus integral in \er{eq:op-tildeS} is finite. The next theorem shows that $\widetilde{\op{S}}_k$ coincides with the dynamic programming evolution operator $\op{S}_k$ of \er{eq:op-DPP-k} on $\mathsf{dom}(\op{S}_k)$, see \er{def:dom-Sk}.
\begin{theorem}
\label{thm:kernel-DPP-op}
Suppose that  $\mathsf{dom}(\op{S}_k)\neq\emptyset$ at horizon $k\in\N$. Then, for any $\phi\in\mathsf{dom}(\op{S}_k)$,
\begin{align}
\label{eq:thm:kernel-DPP-op}
\op{S}_k\phi=\widetilde{\op{S}}_k\phi.
\end{align}
\end{theorem}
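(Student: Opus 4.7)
The plan is to exploit the duality between $\mathcal{Q}_+^{-M}(\R^n)$ and $\mathcal{Q}_-^{-M}(\R^n)$ to rewrite $\phi$ as a max-plus integral against the quadratic kernel $\psi$, then push $\op{S}_k$ inside this integral using max-plus linearity, and finally recognise that the resulting expression collapses to $\widetilde{\op{S}}_k \phi$ via the definitions \er{eq:kernel-primal} and \er{eq:kernel-primal-inv}.

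First, since $\phi\in\mathsf{dom}(\op{S}_k)\subset\mathcal{Q}_+^{-M}(\R^n)$, Corollary \ref{cor:duality} provides $\hat\phi\doteq\op{D}_\psi\phi\in\mathcal{Q}_-^{-M}(\R^n)$ satisfying $\phi=\op{D}_\psi^{-1}\hat\phi$, i.e.
\begin{align}
\nn
\phi(y)=\int_{\R^n}^\oplus \psi(y,z)\otimes\hat\phi(z)\,dz,\quad y\in\R^n.
\end{align}
Applying $\op{S}_k$ and using the max-plus linearity of $\op{S}_k$ over max-plus integrals (the natural extension of the identity $\op{S}_k(a\otimes\phi_1\oplus\phi_2)=a\otimes\op{S}_k\phi_1\oplus\op{S}_k\phi_2$ noted at the end of Section \ref{sec:LQR-DRE}), I would pull $\op{S}_k$ past the integration in $z$, giving
\begin{align}
\nn
(\op{S}_k\phi)(x)=\int_{\R^n}^\oplus \left(\op{S}_k\psi(\cdot,z)\right)(x)\otimes\hat\phi(z)\,dz =\int_{\R^n}^\oplus \widehat{\mathrm{S}}_k(x,z)\otimes\hat\phi(z)\,dz,
\end{align}
where the second equality is the definition \er{eq:hatSk} of $\widehat{\mathrm{S}}_k$.

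Next, I would substitute the representation \er{eq:kernel-primal-inv} of $\widehat{\mathrm{S}}_k(x,z)$ as $\op{D}_\psi^{-1}\mathrm{S}_k(x,\cdot)$ evaluated at $z$, yielding
\begin{align}
\nn
(\op{S}_k\phi)(x)=\int_{\R^n}^\oplus \left[\int_{\R^n}^\oplus \psi(z,y)\otimes\mathrm{S}_k(x,y)\,dy\right]\otimes\hat\phi(z)\,dz.
\end{align}
The inner max-plus integral is just a supremum in $y$ and the outer one a supremum in $z$, so the two commute. Swapping them, using the symmetry $\psi(z,y)=\psi(y,z)$ from \er{eq:mp-basis-quad}, and recognising the inner integral in $z$ as $(\op{D}_\psi^{-1}\hat\phi)(y)=\phi(y)$, I obtain
\begin{align}
\nn
(\op{S}_k\phi)(x)=\int_{\R^n}^\oplus \mathrm{S}_k(x,y)\otimes\phi(y)\,dy=(\widetilde{\op{S}}_k\phi)(x),
\end{align}
which is \er{eq:thm:kernel-DPP-op}.

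The only delicate point in this sketch is the use of max-plus linearity of $\op{S}_k$ with respect to the continuous max-plus integral rather than finite max-plus combinations, together with the interchange of the two suprema. Both are standard in the max-plus literature cited in the paper (\cite{M:06,M:08,ZD3:13}), with the interchange being a direct property of $\sup$ and the continuous linearity following from the lower semicontinuity and upper-bounded structure of the integrands arising from quadratic $\psi$ and $\hat\phi\in\mathcal{Q}_-^{-M}(\R^n)$. The semiconvexity/semiconcavity classes established in Theorem \ref{thm:dual} and Theorem \ref{thm:dual-value}, together with the fact that $\phi\in\mathsf{dom}(\op{S}_k)$ guarantees finiteness of $\op{S}_k\phi$, ensure that every max-plus integral written above is attained and finite, so no technicalities about $-\infty$ values intervene.
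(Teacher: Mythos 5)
Your proof is correct and follows essentially the same route as the paper's: both express $\phi$ as $\op{D}_\psi^{-1}\op{D}_\psi\phi$, pull $\op{S}_k$ through the max-plus integral by linearity to obtain $\int_{\R^n}^\oplus \widehat{\mathrm{S}}_k(x,z)\otimes(\op{D}_\psi\phi)(z)\,dz$, substitute the representation \er{eq:kernel-primal-inv} of $\widehat{\mathrm{S}}_k$, swap the two suprema using the symmetry of $\psi$, and recognise the inner integral as $\phi(y)$. The only cosmetic difference is that the paper isolates the intermediate identity \er{eq:kernel-hatS} as a named first step, whereas you fold it into the main computation.
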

\begin{proof}
It is shown first that 
\begin{align}
\label{eq:kernel-hatS}
\left(\widehat{\op{S}}_{k}\hat\phi\right)(x)=\int_{\R^n}^\oplus \widehat{\mathrm{S}}_k(x,z)\otimes \,\hat\phi(z)\,dz
\end{align}
for any $x\in\R^n, k\in\N$, and $\hat\phi\in\mathcal{Q}_-^{-M}(\R^n)$ such that $\op{D}_\psi^{-1}\hat\phi\in\mathsf{dom}(\op{S}_k)$. By max-plus linearity of the operator $\op{S}_k$,
\begin{align}
	& (\widehat{\op{S}}_{k}\hat\phi)(x) =(\op{S}_k\op{D}^{-1}_\psi\,\hat\phi)(x)
	\nn\\
	& =\left(\op{S}_k\int_{\R^n}^\oplus \psi(\cdot,z)\otimes \,\hat\phi(z)\,dz \right)(x)
\nn\\
                                          &=\int_{\R^n}^\oplus \left(\op{S}_k\psi(\cdot,z)\right)(x)\otimes \,\hat\phi(z)\,dz
                                          \nn\\
                                          &
					=\int_{\R^n}^\oplus \widehat{\mathrm{S}}_k(x,z)\otimes \,\hat\phi(z)\,dz.
\nn
\end{align}
Then, for any $x\in\R^n$, $\phi\in\mathsf{dom}(\op{S}_k)\subset\mathcal{Q}_+^{-M}(\R^n)$,   \er{eq:kernel-primal}, \er{eq:kernel-primal-inv}, and \er{eq:kernel-hatS} imply that
\begin{align}
\nn
&\left(\op{S}_k\phi\right)(x) =(\op{S}_k\op{D}_\psi^{-1})(\op{D}_\psi\phi)(x)
\\\nn
&=\int_{\R^n}^\oplus \widehat{\mathrm{S}}_{k}(x,z)\otimes (\op{D}_\psi\phi)(z)\,dz
\\\nn
                             &=\int_{\R^n}^\oplus\left(\int_{\R^n}^\oplus \mathrm{S}_{k}(x,y)\otimes \psi(z,y)\,dy \right)\otimes (\op{D}_\psi\phi)(z)\,dz
\\\nn
                            &=\int_{\R^n}^\oplus\mathrm{S}_{k}(x,y)\otimes\left(\int_{\R^n}^\oplus\psi(y,z)\otimes(\op{D}_\psi\phi)(z)\,dz\right)\,dy
\\\nn
                            &=\int_{\R^n}^\oplus\mathrm{S}_{k}(x,y)\otimes\phi(y)\,dy
                           =(\widetilde{\op{S}}_k\phi)(x),
\end{align}
where the third equality uses the fact that $\psi(x,z)=\psi(z,x)$ for all $x,z\in\R^n$.
\end{proof}

Theorem \ref{thm:kernel-DPP-op} and \er{eq:op-tildeS} show that the dynamic programming evolution operator $\op{S}_k$ is a max-plus linear max-plus integral operator with kernel $\mathrm{S}_k$ defined by \er{eq:kernel-primal}. 
\begin{corollary}
For any $\phi\in\mathsf{dom}(\op{S}_k)\subset\mathcal{Q}^{-M}_+(\R^n), k\in\N$, the dynamic programming evolution operator 
 $\op{S}_k$ of \er{eq:op-DPP-k} satisfies
\begin{align}
\label{eq:Sk-kernel-represent}
({\op{S}}_k\phi)(x)=\int_{\R^n}^\oplus \mathrm{S}_{k}(x,y)\otimes\, \phi(y)\,dy
\end{align}
for all $x\in\R^n$.
\end{corollary}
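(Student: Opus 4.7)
The corollary is essentially a direct rephrasing of Theorem \ref{thm:kernel-DPP-op} using the definition of $\widetilde{\op{S}}_k$. The plan is therefore to chain these two results: fix an arbitrary $\phi \in \mathsf{dom}(\op{S}_k) \subset \mathcal{Q}_+^{-M}(\R^n)$ and an arbitrary $x \in \R^n$, apply Theorem \ref{thm:kernel-DPP-op} to obtain the pointwise identity $(\op{S}_k\phi)(x) = (\widetilde{\op{S}}_k\phi)(x)$, and then invoke the defining equation \er{eq:op-tildeS} of $\widetilde{\op{S}}_k$ to rewrite the right-hand side as the max-plus integral against the kernel $\mathrm{S}_k(x,\cdot)$.

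The only subtlety that deserves a remark is that the definition \er{eq:op-tildeS} is stated under the finiteness requirement that the max-plus integral $\int^\oplus_{\R^n} \mathrm{S}_k(x,y)\otimes\phi(y)\,dy$ be finite. Since $\phi \in \mathsf{dom}(\op{S}_k)$ by hypothesis, Theorem \ref{thm:value-space} together with Remark \ref{rmk:value-space} guarantees that $\op{S}_k\phi \in \mathcal{Q}_+^{-M}(\R^n)$ is quadratic and finite-valued, and Theorem \ref{thm:kernel-DPP-op} then transfers this finiteness to $\widetilde{\op{S}}_k\phi$, so no integrability issue arises and the substitution is legitimate.

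I do not anticipate a real obstacle, since the Corollary is stated immediately after its parent theorem and simply records \er{eq:Sk-kernel-represent} as a standalone kernel representation of $\op{S}_k$ for ease of later reference. Accordingly the proof should amount to one line: by Theorem \ref{thm:kernel-DPP-op} and the definition \er{eq:op-tildeS} of $\widetilde{\op{S}}_k$, $(\op{S}_k\phi)(x) = (\widetilde{\op{S}}_k\phi)(x) = \int^\oplus_{\R^n}\mathrm{S}_k(x,y)\otimes\phi(y)\,dy$, completing the argument.
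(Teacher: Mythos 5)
Your proposal is correct and matches the paper's treatment exactly: the corollary is an immediate consequence of Theorem \ref{thm:kernel-DPP-op} combined with the definition \er{eq:op-tildeS} of $\widetilde{\op{S}}_k$, which is precisely why the paper states it without a separate proof. Your additional remark on finiteness of the max-plus integral is a reasonable sanity check but introduces nothing beyond what the paper already implies.
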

Using \er{eq:Sk-kernel-represent}, the value function $W_k=\op{S}_k W_0$ defined with respect to $W_0\in\mathsf{dom}(\op{S}_k)$ can be expressed by
\begin{align}
\label{eq:Wk-primal-kernel}
W_k(x)=\int_{\R^n}^\oplus \mathrm{S}_k(x,y)\otimes W_0(y)\,dy
\end{align}
for all $x\in\R^n$ and $k\in\N$. Analogous to $\mathrm{B}_k$ of \er{eq:Bk}, the function $\mathrm{S}_k, k\in\N$, is a quadratic function of the form
\begin{align}
\label{eq:quad-primal}
\mathrm{S}_{k}(x,y)&=\frac{1}{2}\left[\ba{c}x \\ y \ea\right]^T\Lambda_{k} \left[\ba{c}x \\ y \ea\right]
\end{align}
for all $x,y\in\R^n$, in which the Hessian $\Lambda_k$ is defined in block form by $\Lambda_k= \left[\ba{cc}\Lambda_{k}^{11}&\Lambda_{k}^{12}\\\Lambda_{k}^{21}&\Lambda_{k}^{22}\ea\right]\in\M^{2n\times 2n}$. 
Analogous to the propagation rules for $\mathrm{B}_k$ and its Hessian $\Theta_k$ specified in \er{eq:propag-Bk} and \er{eq:propaga-Theta},  $\mathrm{S}_k$ and its Hessian $\Lambda_k$ follow similar propagation rules. 
\begin{theorem}
\label{thm:primal-propa}
For any $k_1,k_2\in\N$, the functions  $\mathrm{S}_{k_1}, \mathrm{S}_{k_2}$ defined by \er{eq:kernel-primal} satisfy
\begin{align}
\label{eq:convolution-kernel}
\mathrm{S}_{k_1+k_2}(x,y)=\int_{\R^n}^\oplus \mathrm{S}_{k_1}(x,\rho)\otimes\mathrm{S}_{k_2}(\rho,y)\,d\rho
\end{align}
for all $x,y\in\R^n$ and the matrices $\Lambda_{k_1}, \Lambda_{k_2}$ of \er{eq:quad-primal} satisfy
\begin{align}
\label{eq:circledast-Lamk}
\Lambda_{k_1+k_2}=\Lambda_{k_1}\circledast\Lambda_{k_2},
\end{align}
where the $\circledast$ operation is defined as per \er{eq:propaga-Theta}.
\end{theorem}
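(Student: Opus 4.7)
The plan is to exploit the semigroup property $\op{S}_{k_1+k_2}=\op{S}_{k_1}\circ\op{S}_{k_2}$ of the dynamic programming evolution operator together with the max-plus integral representation \er{eq:Sk-kernel-represent} from Theorem \ref{thm:kernel-DPP-op}. The first identity \er{eq:convolution-kernel} follows by a Chapman--Kolmogorov style argument at the level of kernels, while \er{eq:circledast-Lamk} then follows by carrying out the resulting max-plus integration explicitly on the quadratic representation \er{eq:quad-primal}.

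For \er{eq:convolution-kernel}, I would first apply both sides of the semigroup identity to the quadratic $\psi(\cdot,z)$ of \er{eq:mp-basis-quad}. Using definition \er{eq:hatSk} this gives $\widehat{\mathrm{S}}_{k_1+k_2}(x,z)=\bigl(\op{S}_{k_1}\widehat{\mathrm{S}}_{k_2}(\cdot,z)\bigr)(x)$. Applying the kernel representation \er{eq:Sk-kernel-represent} to $\widehat{\mathrm{S}}_{k_2}(\cdot,z)$, which lies in $\mathsf{dom}(\op{S}_{k_1})$ because $Q_{k_2}^{11}=\op{R}_{k_2}(M)>M$ by Assumption \ref{ass:M} and Theorem \ref{thm:value-space}, yields
\[
\widehat{\mathrm{S}}_{k_1+k_2}(x,z)=\int_{\R^n}^\oplus \mathrm{S}_{k_1}(x,\rho)\otimes\widehat{\mathrm{S}}_{k_2}(\rho,z)\,d\rho.
\]
Substituting the dual identity \er{eq:kernel-primal-inv} for $\widehat{\mathrm{S}}_{k_2}(\rho,z)$ and interchanging the order of the two max-plus integrations (permitted because $\sup$ over independent variables commutes) exhibits the right-hand side as $\op{D}_\psi^{-1}$ applied (in its free argument) to the claimed convolved kernel. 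Since $\widehat{\mathrm{S}}_{k_1+k_2}(x,\cdot)=\op{D}_\psi^{-1}\mathrm{S}_{k_1+k_2}(x,\cdot)$ by \er{eq:kernel-primal-inv}, injectivity of $\op{D}_\psi^{-1}$ from Corollary \ref{cor:duality} then gives \er{eq:convolution-kernel}.

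For \er{eq:circledast-Lamk}, I would substitute the quadratic form \er{eq:quad-primal} into the integrand on the right-hand side of \er{eq:convolution-kernel} and recognise it as a quadratic in $\rho$ with Hessian $\Lambda_{k_1}^{22}+\Lambda_{k_2}^{11}$. Completing the square in $\rho$, evaluating at the unique maximiser, and collecting the remaining terms as a quadratic in $(x,y)$ produces a Hessian whose block-partitioned form matches exactly the definition \er{eq:propaga-Theta} of $\Lambda_{k_1}\circledast\Lambda_{k_2}$, with $-(\Lambda_{k_1}^{22}+\Lambda_{k_2}^{11})$ playing the role of $\Pi_{(k_1,k_2)}$.

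The main obstacle is verifying the negative-definiteness $\Lambda_{k_1}^{22}+\Lambda_{k_2}^{11}<0$ required for finiteness of the max-plus integral in \er{eq:convolution-kernel}, and hence well-posedness of $\Lambda_{k_1}\circledast\Lambda_{k_2}$. This is the primal-space counterpart of the condition $\Pi_{(k_1,k_2)}>0$ handled for the dual case in Theorem 4.2 of \cite{ZD3:13}, and I expect it to follow by combining the explicit form of $\Lambda_k$ extracted from \er{eq:kernel-primal} via the quadratic structure of $\widehat{\mathrm{S}}_k$ with inequality \er{ineq:M3} of Assumption \ref{ass:M} together with the inductive structure of $Q_k$ in \er{eq:dynamics-Q}.
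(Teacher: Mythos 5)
Your proposal is correct, and its second half (substituting the quadratic form \er{eq:quad-primal} into \er{eq:convolution-kernel} and carrying out the maximisation over $\rho$ to obtain \er{eq:circledast-Lamk}) is exactly what the paper does. For the kernel identity \er{eq:convolution-kernel} itself you take a genuinely different extraction route. The paper applies $\op{S}_{k_1+k_2}=\op{S}_{k_1}\circ\op{S}_{k_2}$ to an \emph{arbitrary} $\phi\in\mathsf{dom}(\op{S}_{k_1+k_2})$, invokes the kernel representation \er{eq:Sk-kernel-represent} twice, interchanges the two max-plus suprema, and concludes from the arbitrariness of $\phi$. You instead test against the single family $\psi(\cdot,z)$, arrive at an identity between the $\op{D}_\psi^{-1}$-transforms (in the second argument) of $\mathrm{S}_{k_1+k_2}$ and of the convolved kernel, and cancel $\op{D}_\psi^{-1}$ by injectivity. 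Your route is tighter at the last step: the paper's ``since $\phi$ is arbitrary'' silently assumes that two quadratic kernels inducing the same max-plus integral operator on a restricted class of quadratic test functions must coincide pointwise, whereas invertibility of the duality pairing settles this cleanly. The price is extra domain bookkeeping: $\widehat{\mathrm{S}}_{k_2}(\cdot,z)$ is not a homogeneous quadratic, so it is not literally an element of $\mathsf{dom}(\op{S}_{k_1})$ as defined in \er{def:dom-Sk}, and the injectivity you need is that of Theorem \ref{thm:dual} on the semiconcave class rather than Corollary \ref{cor:duality} on $\mathcal{Q}_-^{-M}(\R^n)$; both extensions are routine by max-plus linearity, and the paper is equally informal on this point. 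Two small corrections: in the $\circledast$ formula the role of $\Pi_{(k_1,k_2)}$ is played by $\Lambda_{k_1}^{22}+\Lambda_{k_2}^{11}$ itself, not its negative; and the well-posedness issue you flag at the end (negative definiteness of that Hessian) is not addressed in the paper's proof at all, so your closing paragraph is, if anything, more careful than the published argument.
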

\begin{proof}
For any $x\in\R^n$, $k_1,k_2\in\N$ and $\phi\in\mathsf{dom}(\op{S}_{k_1+k_2})$, from \er{eq:thm:kernel-DPP-op},
\begin{align}
\nn
&(\op{S}_{k_1+k_2}\phi)(x)=\int_{\R^n}^\oplus \mathrm{S}_{k_1+k_2}(x,y)\otimes\phi(y)\,dy
\\\nn
               &=(\op{S}_{k_1}(\op{S}_{k_2}\phi))(x)
              =\int_{\R^n}^\oplus\mathrm{S}_{k_1}(x,\rho)\otimes(\op{S}_{k_2}\phi)(\rho)\,d\rho
\\\nn
              &=\int_{\R^n}^\oplus\mathrm{S}_{k_1}(x,\rho)\otimes\left(\int_{\R^n}^\oplus\mathrm{S}_{k_2}(\rho,y)\otimes\phi(y)\,dy\right)\,d\rho
             \\\nn
             &=\int_{\R^n}^\oplus\left(\int_{\R^n}^\oplus \mathrm{S}_{k_1}(x,\rho)\otimes\mathrm{S}_{k_2}(\rho,y)\,d\rho\right)\otimes\phi(y)\,dy.
\end{align}
Since $\phi\in\mathsf{dom}(\op{S}_{k_1+k_2})$ is arbitrary, \er{eq:convolution-kernel} follows. Applying the quadratic form \er{eq:quad-primal}  of  $\mathrm{S}_{k_1+k_2}, \mathrm{S}_{k_1}$, and $\mathrm{S}_{k_2}$, and evaluating the quadratic maximisation with respect to $\rho\in\R^n$ in \er{eq:convolution-kernel} explicitly yields \er{eq:circledast-Lamk}.
\end{proof}

Similar to $\Theta_1$ in \er{eq:Theta1}, the initial condition $\Lambda_1=\left[\ba{cc}\Lambda_1^{11}&\Lambda_1^{12}\\\Lambda_1^{21}&\Lambda_1^{22}\ea\right]$ can be obtained from definition \er{eq:kernel-primal}, with
\begin{align}
\nn
\Lambda_1^{11}&=Q_1^{12}(M-Q_1^{22})^{-1}Q_1^{21}+Q_1^{11},
\\\nn
\Lambda_1^{12}&=Q_1^{12}(M-Q_1^{22})^{-1}M,
\\\label{eq:Lambda1}
\Lambda_1^{21}&=M(M-Q_1^{22})^{-1}Q_1^{21},
\\\nn
\Lambda_1^{22}&=M(M-Q_1^{22})^{-1}M-M,
\end{align}
where $Q_1$ is as per \er{eq:Q1}.

Recalling that the set  $\{\op{S}_k, k\in\N\}$, along with operator composition, defines a semigroup of operators, the sets $\{\mathrm{S}_k,k\in\N\}$ and $\{\Lambda_k, k\in\N\}$ define semigroups, with respective associative binary operations defined by the max-plus convolution  \er{eq:convolution-kernel} and the $\circledast$ operation \er{eq:circledast-Lamk}. The semigroup $\{\Lambda_k, k\in\N\}$ is referred to here as the max-plus primal space fundamental solution for the DRE \er{eq:DRE}. Using representation \er{eq:Wk-primal-kernel}, the semigroup $\{\Lambda_k,k\in\N\}$ can be used to derive a new representation of the solution $P_k=\op{R}_k(P_0)$ of the DRE \er{eq:DRE}. 

\begin{theorem}
\label{thm:primal-mp-fund}
Given the semigroup $\{\Lambda_k, k\in\N\}$ of \er{eq:quad-primal} and \er{eq:circledast-Lamk},  for any $P_0\in\M^{n\times n}$ such that $\Lambda^{22}_k+P_0<0$, the solution   $P_k=\op{R}_k(P_0)$ at horizon $k\in\N$ of the DRE \er{eq:DRE}  is given by
\begin{equation}
P_k=\Psi^p_k(P_0),
\label{eq:Pk-primal-repre}
\end{equation} 
where $\Psi^p_k:\M^{n\times n}\rightarrow\M^{n\times n}$ is defined by
\begin{align}
\label{def:Psi-p}
\Psi^p_k(\Omega)\doteq\Lambda_k^{11}-\Lambda_k^{12}(\Omega+\Lambda_k^{22})^{-1}\Lambda_k^{21},
\end{align}
for any $\Omega\in\M^{n\times n}$ satisfying $\Omega+\Lambda_k^{22}<0$.
\end{theorem}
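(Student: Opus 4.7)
The plan is to recover $P_k$ directly from the kernel representation of the value function established in the preceding corollary, namely equation \er{eq:Wk-primal-kernel}. Taking $W_0(x)=\demi x^T P_0 x\in\mathsf{dom}(\op{S}_k)$ (which is consistent with the hypothesis $\Lambda_k^{22}+P_0<0$, to be verified), and inserting the quadratic form \er{eq:quad-primal} of the kernel $\mathrm{S}_k$, the right-hand side of \er{eq:Wk-primal-kernel} becomes
\begin{align*}
W_k(x)=\sup_{y\in\R^n}\left\{\demi\left[\ba{c}x\\y\ea\right]^T\!\left[\ba{cc}\Lambda_k^{11}&\Lambda_k^{12}\\ \Lambda_k^{21}&\Lambda_k^{22}\ea\right]\!\left[\ba{c}x\\y\ea\right]+\demi y^T P_0\, y\right\}.
\end{align*}

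Next I would evaluate this supremum by completing the square in $y$. Collecting the $y$-dependent terms gives an unconstrained quadratic in $y$ whose Hessian equals $\Lambda_k^{22}+P_0$; this is exactly the matrix that the theorem's hypothesis assumes is negative definite, which guarantees the maximiser is unique and finite, with $y^\ast=-(\Lambda_k^{22}+P_0)^{-1}\Lambda_k^{21}x$. Substituting back yields a quadratic form in $x$ with Hessian
\begin{align*}
\Lambda_k^{11}-\Lambda_k^{12}(\Lambda_k^{22}+P_0)^{-1}\Lambda_k^{21}=\Psi^p_k(P_0).
\end{align*}
Since $W_k(x)=\demi x^T\op{R}_k(P_0)x$ by the LQR representation discussed after \er{eq:payoff} and by Theorem \ref{thm:value-space}, comparing Hessians and using the arbitrariness of $x\in\R^n$ yields $\op{R}_k(P_0)=\Psi^p_k(P_0)$, which is the claim \er{eq:Pk-primal-repre}.

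The main obstacle is a consistency check: I need to justify that the assumption $\Lambda_k^{22}+P_0<0$ is indeed what makes $W_0\in\mathsf{dom}(\op{S}_k)$, so that Theorem \ref{thm:kernel-DPP-op} is applicable and $\op{R}_k(P_0)$ exists. The cleanest way is to argue by the equivalence between finiteness of the max-plus integral and well-posedness of the supremum in dynamic programming: the kernel $\mathrm{S}_k$ was constructed precisely so that $\widetilde{\op{S}}_k$ agrees with $\op{S}_k$ on its domain, and the max-plus integral $\int_{\R^n}^\oplus \mathrm{S}_k(x,y)\otimes W_0(y)\,dy$ is finite for every $x$ if and only if $\Lambda_k^{22}+P_0<0$ (the Hessian in $y$ must be negative definite). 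Thus the hypothesis both ensures the maximiser is well-defined and, equivalently, that $\op{R}_k(P_0)$ exists, so the two sides of \er{eq:Pk-primal-repre} are simultaneously meaningful. The remaining algebra of completing the square is routine and may be left implicit.
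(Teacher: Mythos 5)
Your proposal is correct and follows essentially the same route as the paper: substitute the quadratic form of $\mathrm{S}_k$ into \er{eq:Wk-primal-kernel} with $W_0(x)=\frac{1}{2}x^TP_0x$, evaluate the max-plus integral by completing the square in $y$ (where $\Lambda_k^{22}+P_0<0$ guarantees a finite maximiser), and match Hessians with $W_k(x)=\frac{1}{2}x^T\op{R}_k(P_0)x$. Your closing consistency check---that $\Lambda_k^{22}+P_0<0$ is equivalent to existence of $\op{R}_k(P_0)$---is exactly the content of the remark the paper places immediately after the theorem, citing earlier work.
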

\begin{proof}
Fix $k\in\N$ and an arbitrary $P_0\in\M^{n\times n}$ such that $P_0+\Lambda_k^{22}<0$.  The solution $P_k=\op{R}_k(P_0)$ of the DRE \er{eq:DRE} is the Hessian of the value function $W_k$ of \er{eq:value} with terminal payoff $W_0(x)=\frac{1}{2}x^TP_0 x, x\in\R^n$. Applying \er{eq:Wk-primal-kernel} yields for any $x\in\R^n$,
\begin{align}
\nn
&\ts{\frac{1}{2}}x^T P_k x=W_k(x)=(\op{S}_k W_0)(x)
\\\nn
            &=\int_{\R^n}^\oplus \mathrm{S}_k(x,y)\otimes \ts{\frac{1}{2}}y^TP_0 y\,dy
\\\nn
            &=\max_{y\in\R^n}\left\{{\frac{1}{2}}\left[\ba{c}x \\ y \ea\right]^T  \left[\ba{cc}\Lambda_{k}^{11}&\Lambda_{k}^{12}\\\Lambda_{k}^{21}&\Lambda_{k}^{22}\ea\right]   \left[\ba{c}x \\ y \ea\right]+{\frac{1}{2}}y^TP_0 y\right\}
\\\nn
            &=\ts{\frac{1}{2}}x^T(\Lambda_k^{11}-\Lambda_k^{12}(P_0+\Lambda_k^{22})^{-1}\Lambda_k^{21}) x.
\end{align}
Since this holds for all $x\in\R^n$, \er{eq:Pk-primal-repre} follows. 
\end{proof}

\begin{remark}
It has been shown \cite{ZD4:13} that the condition $\Lambda_k^{22}+P_0<0$ is a necessary and sufficient condition for the existence of the solution $P_k=\op{R}_k(P_0)$ of DRE \er{eq:DRE} at time $k\in\N$.
\end{remark}

The commutation diagram for computation of $P_k=\op{R}_k(P_0)$ via the max-plus primal and dual fundamental solution semigroup is shown in Figure \ref{fig:dual-primal}.
\begin{figure}[h]
\begin{center}
\begin{tikzpicture}[node distance=2cm, auto]
  \node (C) {$P_0$};
  \node(D)[right of=C,node distance=3cm]{$P_k$};
  \node (P) [below of=C,node distance=2cm] {$O_0$};
  \node (Ai) [right of=P,node distance=3cm] {$O_k$};
  \draw[->] (C) to node [swap] {$\Upsilon$} (P);
  \draw[->] (C) to node []{$\Psi^p_k$} (D);
  \draw[->] (P) to node [] {$\Psi^d_k$} (Ai);
   \draw[->](Ai) to node []{$\Upsilon^{-1}$}(D);
\end{tikzpicture}
\end{center}
\caption{\footnotesize  The commutation diagram for computation of $P_k=\op{R}_k(P_0)$ via max-plus dual and primal space fundamental solution semigroups.}
\label{fig:dual-primal}
\end{figure}

Comparing \er{eq:dual-fund} and \er{eq:Pk-primal-repre}, the representation of the DRE solution $P_k=\op{R}_k(P_0)$ in terms of the max-plus primal space fundamental solution semigroup $\{\Lambda_k,k\in\N\}$ via \er{eq:Pk-primal-repre} has a simpler form than the representation of \er{eq:dual-fund} via the max-plus dual space fundamental solution semigroup $\{\Theta_k,k\in\N\}$. Here, $\{\Lambda_k,k\in\N\}$ is developed directly in the max-plus primal space, and avoids transformation $\Upsilon$ and $\Upsilon^{-1}$ between the max-plus primal and dual spaces.

\section{Connections between the  max-plus dual space and primal space fundamental solution semigroups}
\label{sec:connection}
A solution $P_k=\op{R}_k(P_0)$ of the DRE \er{eq:DRE} can be represented either by the max-plus dual space fundamental solution semigroup $\{\Theta_k, k\in\N\}$ via \er{eq:dual-fund} or by the primal space fundamental solution semigroup $\{\Lambda_k, k\in\N\}$ via \er{eq:Pk-primal-repre}.  This suggests that there exists a correspondence between elements of these two semigroups. This section explores the relationship between these two semigroups.  

Note that functions $\mathrm{B}_k$,  $\mathrm{S}_k$ of \er{eq:Bk}, \er{eq:kernel-primal} are derived from $\widehat{\mathrm{S}}_k$ of \er{eq:hatSk} via the operator $\op{D}_\psi$ of \er{eq:op-dual}.
Thus, both $\Theta_k$ and $\Lambda_k$, the Hessians of $\mathrm{B}_k$ and $\mathrm{S}_k$, are related to $Q_k$, the Hessian of $\widehat{\mathrm{S}}_k$. In order to find the connection between $\Theta_k$ and $\Lambda_k$, the connections between $\Theta_k$ and $Q_k$, and connections between $\Lambda_k$ and $Q_k$ are established first. To this end, define an operator $\Gamma:\M^{2n\times 2n}\rightarrow\M^{2n\times 2n}$ by
\begin{align}
\label{eq:Gamma}
&\Gamma(\Omega)\doteq\Gamma\left(\left[\ba{cc} \Omega^{11}&\Omega^{12}\\\Omega^{21}&\Omega^{22}\ea\right]\right)
\\\nn
       &= \left[\! \ba{cc}M-M
       (M \! + \! \Omega^{11})^{-1}
       M&M(M\! +\! \Omega^{11})^{-1}\Omega^{12}\\\Omega^{21}(M\! + \! \Omega^{11})^{-1}M&\Omega^{22}-\Omega^{21}(M\! + \!\Omega^{11})^{-1}\Omega^{12} \ea\! \right]
\end{align}
for $\Omega\in\M^{2n\times 2n}$ such that $\Omega^{11}+M<0$. It can be verified directly that the inverse of $\Gamma$ is
 \begin{align}
\nn 
 &\Gamma^{-1}(\Omega)\doteq\Gamma^{-1}\left(\left[\ba{cc} \Omega^{11}&\Omega^{12}\\\Omega^{21}&\Omega^{22}\ea\right]\right)
 \\\nn
 &=\left[\!\ba{cc} M(M \! - \! \Omega^{11})^{-1}M\! - \! M&M(M-\Omega^{11})^{-1}\Omega^{12}\\ \Omega^{21}(M \! - \! \Omega^{11})^{-1}M&\Omega^{21}(M \! - \! \Omega^{11})^{-1}\Omega^{12}+\Omega^{22} \ea\!\right]
 \\
 &=-\Gamma(-\Omega)
 \label{eq:inv-Gamma}
\end{align}
for $\Omega\in\M^{2n\times 2n}$ such that $M-\Omega^{11}<0$. 
It has been shown in Theorem 3.8 of \cite{ZD3:13} that $\Theta_k$ and $Q_k$ are connected by
\begin{align}
\label{eq:map-Q-T}
Q_k=\Gamma(\Theta_k),\quad \Theta_k=\Gamma^{-1}(Q_k),
\end{align}
for any $k\in\N$. Note that $Q_k^{11}=\op{R}_k(M)>M$ and $\Theta_k^{11}=M(M-Q_k^{11})^{-1}M-M<-M$ by \er{eq:dynamics-Q}. Thus, both operations $\Gamma({\Theta_k})$ and $\Gamma^{-1}(Q_k)$ are well defined. To establish the connection between $\Lambda_k$ and $Q_k$, define an operator $\Delta:\M^{2n\times 2n}\rightarrow\M^{2n\times 2n}$ by
\begin{align}
\label{eq:Delta}
\Delta(\Omega)&\doteq\Delta\left(\left[\ba{cc}\Omega_{11} &\Omega_{12}\\ \Omega_{21} & \Omega_{22}\ea\right]\right)\doteq \left[\ba{cc}\Omega_{22} &\Omega_{21}\\ \Omega_{12} & \Omega_{11}\ea\right]
\end{align}
for any $\Omega\in\M^{2n\times 2n}$, and a second operation $\Pi:\M^{2n\times 2n}\rightarrow\M^{2n\times 2n}$ via the composition 
\begin{align}
\label{eq:Pi}
\Pi\doteq \Delta\Gamma\Delta,
\end{align}
 where $\Gamma,\Delta$ are as per \er{eq:Gamma} and \er{eq:Delta}.  By inspection of \er{eq:Delta}, $\Delta^{-1}=\Delta$. Consequently, for any $\Omega\in\M^{2n\times 2n}$ such that $M-\Omega^{22}<0$, 
 \begin{align}
 \label{eq:Pi-inv}
 & \Pi^{-1}(\Omega)=(\Delta\Gamma\Delta)^{-1}(\Omega)=(\Delta^{-1}\Gamma^{-1}\Delta^{-1})(\Omega)
 \\\nn
                           &=\Delta(-\Gamma(-\Delta(\Omega)))=-(\Delta\Gamma\Delta)(-\Omega)
 		=-\Pi(-\Omega).
 \end{align}
The matrix operations $\Pi$ and $\Pi^{-1}$ characterise the connection between $Q_k$ of \er{eq:quadr-hatSk} and $\Lambda_k$ of \er{eq:quad-primal} for any $k\in\N$. 
\begin{theorem}
\label{thm:Qk-Lamk}
The matrices $Q_k$ of \er{eq:quadr-hatSk} and $\Lambda_k$ of \er{eq:quad-primal} satisfy
\begin{align}
\label{eq:thm-Qk-Lamk}
Q_{k}=\Pi(\Lambda_{k}), \quad \Lambda_k=\Pi^{-1}(Q_k)
\end{align}
for any $k\in\N$.
\end{theorem}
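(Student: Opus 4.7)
The plan is to reduce the claim to the already-established identity $\Theta_k=\Gamma^{-1}(Q_k)$ of \er{eq:map-Q-T} (Theorem 3.8 of \cite{ZD3:13}) by a change-of-variables argument in which the operator $\Delta$ simply encodes a swap of the two arguments of the underlying bivariate quadratic. The key observation is that $\mathrm{B}_k$ of \er{eq:Bk} and $\mathrm{S}_k$ of \er{eq:kernel-primal} are obtained from $\widehat{\mathrm{S}}_k$ by a single application of $\op{D}_\psi$; they differ only in which of the two arguments of $\widehat{\mathrm{S}}_k$ is transformed. Since swapping the arguments of a quadratic form swaps the two diagonal blocks and transposes the off-diagonal blocks of its Hessian, which is exactly what $\Delta$ of \er{eq:Delta} does, one expects the $\Lambda_k$-$Q_k$ correspondence to arise from the $\Theta_k$-$Q_k$ correspondence by conjugation by $\Delta$.

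To make this precise I would introduce the auxiliary function $\mathrm{F}_k(z,x)\doteq\widehat{\mathrm{S}}_k(x,z)$, whose Hessian (in the new variable order $(z,x)$) is $\Delta(Q_k)$. By inspection of \er{eq:kernel-primal} and \er{eq:Bk}, the function $\mathrm{S}_k(x,y)$ then stands in the same relation to $\mathrm{F}_k$ as $\mathrm{B}_k(y,z)$ stands to $\widehat{\mathrm{S}}_k$, namely $\mathrm{S}_k(x,y)=\bigl(\op{D}_\psi\,\mathrm{F}_k(\cdot,x)\bigr)(y)$. Applicability of \er{eq:map-Q-T} in this new setting requires the $(1,1)$-block of $\Delta(Q_k)$, which is $Q_k^{22}$, to satisfy the same positivity hypothesis that $Q_k^{11}$ did; the bound $Q_k^{22}>M$ derived in the run-up to \er{eq:mono-Q22} supplies this and makes $\Gamma^{-1}$ well defined on $\Delta(Q_k)$.

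Applying Theorem 3.8 of \cite{ZD3:13} with $\widehat{\mathrm{S}}_k$ replaced by $\mathrm{F}_k$ then shows that the Hessian of $(y,x)\mapsto\mathrm{S}_k(x,y)$ equals $\Gamma^{-1}(\Delta(Q_k))$. Swapping the variables back to the natural order $(x,y)$ inserts one more $\Delta$ at the Hessian level, giving
\begin{align*}
\Lambda_k \;=\; \Delta\bigl(\Gamma^{-1}(\Delta(Q_k))\bigr) \;=\; (\Delta\Gamma\Delta)^{-1}(Q_k) \;=\; \Pi^{-1}(Q_k),
\end{align*}
where the middle equality uses $\Delta^{-1}=\Delta$ together with the definition \er{eq:Pi} of $\Pi$, consistent with the formula for $\Pi^{-1}$ in \er{eq:Pi-inv}. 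The companion identity $Q_k=\Pi(\Lambda_k)$ then follows by applying $\Pi$ to both sides.

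The main obstacle is conceptual rather than computational: one must verify that the technical hypotheses needed to quote Theorem 3.8 of \cite{ZD3:13} really do transfer to the swapped quadratic $\mathrm{F}_k$, with the sign condition $Q_k^{11}>M$ used there taken over by $Q_k^{22}>M$ after the swap. Should a reader prefer a self-contained verification, the alternative is to substitute \er{eq:quadr-hatSk} into \er{eq:kernel-primal} and evaluate the max over $z$ explicitly (finite because $M-Q_k^{22}<0$); reading off the four blocks of $\Lambda_k$ reproduces the formulae \er{eq:Lambda1} at step $k$ and can be matched term-by-term against the block expansion of $\Pi^{-1}(Q_k)=\Delta\Gamma^{-1}\Delta(Q_k)$ obtained from \er{eq:inv-Gamma}.
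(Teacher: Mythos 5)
Your proposal is correct and follows essentially the same route as the paper: the paper's proof is precisely the explicit-maximisation computation you describe as your fallback, rewriting $\widehat{\mathrm{S}}_k(x,z)$ with swapped arguments (which is $\Delta$ at the Hessian level), invoking the $\Gamma$/$\Gamma^{-1}$ correspondence for the partial dual in the first slot, and using the bound $Q_k^{22}>M$ from \er{eq:mono-Q22} to ensure the maximum is finite, before composing to $\Delta\Gamma^{-1}\Delta=\Pi^{-1}$. Your packaging of this as a citation of the $\Theta_k=\Gamma^{-1}(Q_k)$ identity applied to the swapped quadratic $\mathrm{F}_k$ is a cosmetic difference only, and you correctly identify both the key well-definedness hypothesis and the block algebra.
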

\begin{proof}
From \er{eq:dynamics-Q}, \er{eq:mono-Q22}, $Q_k^{22}$ is strictly increasing with respect to $k$ with $Q_0^{22}=M$. Consequently, $Q_k^{22}>M$ for all $k\in\N$. Thus, $\Pi^{-1}(Q_k)$ is well defined for all $k\in\N$. Fix any $x,y\in\R^n$, $k\in\N$. From the definition \er{eq:Pi} of $\Pi$,
\begin{align}
\nn
&{\mathrm{S}}_{k}(x,y)={\frac{1}{2}}\left[\ba{c} x\\y\ea\right]^T\Lambda_k\left[\ba{c} x\\y\ea\right]=\left(\op{D}_\psi\widehat{S}_k(x,\cdot)\right)(y)
\\\nn
&=-\int_{\R^n}^\oplus \psi(z,y)\otimes (-\widehat{\mathrm{S}}_k(x,z))\,dz
\\\nn
                      &=-\max_{z\in\R^n}\left\{\psi(z,y)-\frac{1}{2}\left[\ba{c}x \\ z \ea\right]^T\left[\ba{cc}Q_{k}^{11}&Q_{k}^{12}\\Q_{k}^{21}&Q_{k}^{22}\ea\right] \left[\ba{c}x \\ z \ea\right]  \right\}
\\\nn
                      &=-\max_{z\in\R^n}\left\{\psi(z,y)+\frac{1}{2}\left[\ba{c}z \\ x \ea\right]^T\Delta(-Q_{k}) \left[\ba{c}z \\ x \ea\right]  \right\}
\\\nn
                      &=-\frac{1}{2}\left[\ba{c}y \\ x \ea\right]^T\Gamma(\Delta(-Q_{k}))\left[\ba{c}y \\ x \ea\right]
\\\nn
                     &=\frac{1}{2}\left[\ba{c}x \\ y \ea\right]^T(-\Delta\Gamma\Delta)(-Q_{k})\left[\ba{c}x \\ y \ea\right]
\\\nn
&
=\frac{1}{2}\left[\ba{c}x \\ y \ea\right]^T\Pi^{-1}(Q_{k})\left[\ba{c}x \\ y \ea\right].
\end{align}
That is, $\Lambda_k=\Pi^{-1}(Q_k)$. Since $\Pi$ is an invertible operator, it follows that $Q_k=\Pi(\Lambda_k)$.
\end{proof}

Combining \er{eq:map-Q-T} and \er{eq:thm-Qk-Lamk} yields a correspondence between $\Lambda_k$ and $\Theta_k$ for any $k\in\N$. Define a matrix operation $\Xi:\M^{2n\times 2n}\rightarrow\M^{2n\times 2n}$ by
\begin{align}
\label{eq:Xi}
\Xi\doteq\Pi^{-1}\Gamma.
\end{align} 
The inverse $\Xi^{-1}$ is given by 
$$
\Xi^{-1}(\Omega)=(\Pi^{-1}\Gamma)^{-1}(\Omega)=(\Gamma^{-1}\Pi)(\Omega)=-\Gamma(-\Pi(\Omega))
$$
for all $\Omega\in\M^{2n\times 2n}$ such that $\Omega^{22}+M<0$. 
\begin{theorem}
Elements $\Lambda_k$ and $\Theta_k$ of the max-plus primal and dual space fundamental solution semigroups $\{\Lambda_k, k\in\N\}$ and  $\{\Theta_k, k\in\N\}$ (respectively) satisfy
\begin{align}
\label{eq:Thek-Lamk}
\Lambda_k=\Xi(\Theta_k)
\end{align}
for any $k\in\N$.
\end{theorem}
\begin{proof}
Fix any $k\in\N$. From  \er{eq:map-Q-T} and \er{eq:thm-Qk-Lamk},
\begin{align}
\nn
\Lambda_k&=\Pi^{-1}(Q_k)=\Pi^{-1}(\Gamma(\Theta_k))
=(\Pi^{-1}\Gamma)(\Theta_k)=\Xi(\Theta_k).
\end{align}
\end{proof}

Connections among matrices $\Lambda_k, Q_k, \Theta_k$ follow \er{eq:map-Q-T}, \er{eq:thm-Qk-Lamk} and \er{eq:Thek-Lamk} and are shown by the commutation diagram Figure \ref{fig:correspondence}.
\begin{figure}[h]
\begin{center}
\tikzset{node distance=3cm, auto}
\begin{tikzpicture}
  \node (C) {$\Lambda_k$};
  \node (P) [below of=C] {$Q_k$};
  \node (Ai) [right of=P] {$\Theta_k$};
 
  \draw[->] (Ai.140) to node {$\Xi$} (C.-40);
  \draw[->] (C.-20) to node []{$\Xi^{-1}$} (Ai.120);
 \draw[->] (C.-100) to node [swap] {$\Pi$} (P.100);
  \draw[->] (P.80) to node [swap] {$\Pi^{-1}$} (C.-80);
   \draw[->] (P.20) to node [] {$\Gamma^{-1}$} (Ai.160);
  \draw[->] (Ai.180) to node [] {$\Gamma$} (P.00);
\end{tikzpicture}
\end{center}
\caption{\footnotesize Commutation diagram describes connections between matrices $ Q_k, \Theta_k$, and $\Lambda_k$ of  \er{eq:quadr-hatSk}, \er{eq:Thetak}, and \er{eq:quad-primal}. }
\label{fig:correspondence}
\end{figure}

\if{false}

\section{An example}
\label{sec:example}

An example is presented to demonstrate the solution of DRE \er{eq:DRE} via the max-plus dual space fundamental solution semigroup  \er{eq:dual-fund}, and the max-plus primal space fundamental solution \er{eq:Pk-primal-repre}. Consider a DRE of  \er{eq:DRE} specified by $\gamma\doteq \sqrt{6}$, and
\begin{align}
\label{eq:ex}
A&\doteq\left[\ba{cc}0.2&-1.1\\ -0.5&0.7\ea\right], B\doteq\left[\ba{cc} 0.2&0\\0&-0.4\ea\right], 
\\\nn
\Phi&\doteq\left[\ba{cc} 1.2&0\\0&0.8 \ea\right],\quad M\doteq\left[\ba{cc} -10 &0\\0&-10 \ea\right].
\end{align}
The aim is to compute the solution $P_{32}=\op{R}_{32}(P_0)$ with 
$
P_0\doteq\left[\ba{cc}-1&0.4\\0.4&-4 \ea\right]. 
$
Directly applying the operator $\op{R}$ of \er{eq:Riccati-op} iteratively $32$ times yields the solution 
\begin{align}
\label{eq:solution}
P_{32}=\op{R}_{32}(P_0)=\left[\ba{rr}-2.4500&7.5576\\7.5576&-14.1006 \ea\right]. 
\end{align}

Assumption \ref{ass:M} may be verified to hold for the problem data defined by \er{eq:ex}. Hence, formulae  \er{eq:dual-fund}  and \er{eq:Pk-primal-repre} may be applied to solve DRE \er{eq:DRE}. To this end, note that the initial Hessians $\Theta_1$ and $\Lambda_1$ may be computed from \er{eq:Theta1} and \er{eq:Lambda1}, yielding
$$
\Theta_1=\left[\ba{rr|rr}
2.9225  & -7.7408  & -6.6557  &  1.4841
\\
   -7.7408 &  24.1983 &  16.0934 & -10.9020
   \\\hline
   -6.6557 &  16.0934  &  8.4693 & -11.5210
   \\
    1.4841 & -10.9020 & -11.5210 &  -1.2841
\ea\right]
$$
and
$$	
\Lambda_{1}
	= \left[\ba{rr|rr}
	-14.1750  & 46.1250 &  30.0000 & -18.7500
	\\
   46.1250 & -199.0750 & -165.0000 &  26.2500
   \\\hline
   30.0000 & -165.0000 & -150.0000   &      0.0000
   \\
  -18.7500 &  26.2500 &        0.0000 &  -37.5000
	\ea\right].
$$
Then, the matrices $\Theta_{32}$ and $\Lambda_{32}$ can be computed using  \er{eq:propaga-Theta} and \er{eq:circledast-Lamk}, respectively, yielding
\begin{align}
	\Theta_{32}= \left[\ba{rr|rr}		
    5.3171 &  -8.5259 &  -0.0899 &   0.0786
    \\
   -8.5259 &  18.4613 &   0.1817 &  -0.1588
   \\\hline\label{eq:Theta32}
   -0.0899  &  0.1817 &  -2.0896 &  -5.7750
\\
    0.0786 &  -0.1588 &  -5.7750 &  -3.0515
\ea\right]
\end{align}
and
\begin{align}
	\Lambda_{32}
	= \left[\ba{rr|rr}
	 -2.4670  &  7.5925  &  0.0618 &  -0.0370
	 \\
    7.5925 & -14.1722 &  -0.1267  &  0.0759
    \\\hline\label{eq:Lambda32}
    0.0618 &  -0.1267 & -22.1491 & -26.7161
    \\
   -0.0370  &  0.0759 & -26.7161 & -26.5980
	\ea\right].
\end{align}
With a view to apply formula \er{eq:dual-fund}, note that
$$
O_0=\Upsilon(P_0)=\left[\ba{rr}-1.1441 &   0.7429\\
    0.7429 &  -6.7162 \ea\right]. 
$$
With the matrices $\Theta_{32}$ and $\Lambda_{32}$ computed as per \er{eq:Theta32} and \er{eq:Lambda32}, respectively. A straightforward application of formulas \er{eq:dual-fund} and \er{eq:Pk-primal-repre} yields the solution \er{eq:solution}.   

Since formula \er{eq:Pk-primal-repre} employs the max-plus primal space fundamental solution, it does not require the two steps using operators $\Gamma$ and $\Gamma^{-1}$ in formula \er{eq:dual-fund}. In this specific example, this corresponds to using approximately $25\%$ less computational time in obtaining the same solution $P_{32}$ of \er{eq:solution} than using formula  \er{eq:dual-fund}.

\fi

\section{Conclusions}
\label{sec:con}

A new max-plus fundamental solution semigroup is developed for a class of difference Riccati equations (DREs). This max-plus fundamental solution semigroup admits computation of all solutions of a DRE in a specified class, without invocation of duality via the Legendre-Fenchel transform. Connections between this new primal space fundamental solution semigroup, and the previously known dual space fundamental solution semigroup, are established.


\bibliographystyle{plain}
\bibliography{L2}

\end{document}